\newtheorem{theorem}{Theorem}[section]
\newtheorem*{theorem*}{Theorem}
\newtheorem{remark}{Remark}[section]
\newtheorem{definition}{Definition}[section]
\newtheorem{lemma}[theorem]{Lemma}
\newtheorem{pro}[theorem]{Proposition}
\def\mbN{\mathbb{N}}
\def\mbR{\mathbb{R}}
\def\mbS{\mathbb{S}}
\def\mbT{\mathbb{T}}
\def\mbZ{\mathbb{Z}}
\def\bbf{\mathbf{f}}
\def\bv{\mathbf{v}}
\def\bn{\mathbf{n}}
\def\mcH{\mathcal{H}}
\def\msC{\mathscr{C}}
\def\msN{\mathscr{N}}
\def\msR{\mathscr{R}}
\def\bGm{\Gamma^{\textup{bdry}}}
\def\cGm{\Gamma^{\textup{cont}}}
\def\nGm{\Gamma^{\textup{non}}}
\def\Div{{\rm div}}
\def\bnu{\boldsymbol{\nu}}
\DeclareMathOperator{\dist}{dist}
\newcommand{\Rn}[1]{\expandafter{\romannumeral #1\relax}}
\def\XXint#1#2#3{{\setbox0=\hbox{$#1{#2#3}{\int}$ }
		\vcenter{\hbox{$#2#3$ }}\kern-.6\wd0}}
\subjclass{35Q31, 35Q30, 35B53, 35B25, 76D09}
\keywords{Euler equations, Navier-Stokes equations, Vanishing viscosity limits, Prandtl-Batchelor, Rigidity.}
\begin{document}

\title[A selection principle for 2D steady Euler flows]{A selection principle for 2D steady Euler flows via the vanishing viscosity limit}

\author{Changfeng Gui}
\address{Department of Mathematics, Faculty of Science,  University  of  Macau,  Macao  SAR,  P. R. China}
\email{changfenggui@um.edu.mo}

\author{Chunjing Xie}
\address{School of Mathematical Sciences,  Ministry of Education Key Laboratory of Scientific and Engineering Computing, CMA-Shanghai, Shanghai Jiao Tong University, Shanghai 200240, China.}
\email{cjxie@sjtu.edu.cn}

\author{Huan Xu}
\address{Department of Mathematics, Faculty of Science,  University  of  Macau,  Macao  SAR,  P. R. China}
\email{hzx0016@auburn.edu}

\date{}

\begin{abstract}
The 2D Euler system, which governs inviscid incompressible fluid flow, can admit infinitely many steady solutions in a given domain with slip boundary conditions. To select physical classical solutions, we investigate the vanishing viscosity limits of the steady Navier-Stokes system. The vanishing viscosity limits in periodic strips or bounded connected domains are completely characterized, even when strong boundary layers may appear.

More precisely, we show that the only vanishing viscosity limits in a bounded connected domain are flows with constant vorticity.
The significance of this result is that the approximating Navier-Stokes solutions are not required to have nested closed streamlines, an essential assumption in the century-old Prandtl-Batchelor theorem.
For flows in an infinitely long strip, if the viscous velocity (but not the pressure) is periodic in the strip direction, 
we show that the only vanishing viscosity limits are constant flows, Couette flows, and Poiseuille flows. 
The proof relies on a delicate analysis of the streamlines for both viscous and inviscid flows, in which a key observation is that the set of chaotic streamlines for the Euler flow is null with respect to two-dimensional Lebesgue measure.
The second result depends not only on the first but also on a powerful rigidity theorem that any non-shear steady classical Euler flow in a periodic strip must have closed streamlines, established via an analysis of streamlines and a novel total curvature estimate.
\end{abstract}

\maketitle



\section{Introduction}
In this paper, we investigate the steady incompressible Euler equations in a planar domain,
\begin{eqnarray}\label{steady_Euler}
\left\{\begin{aligned}
&\bv\cdot\nabla\bv+\nabla P=0, & {\rm in}\ \Omega,\\
&\Div\,\bv=0, & {\rm in}\ \Omega,
\end{aligned}\right.
\end{eqnarray}
where $\bv=(v_1,v_2)$ is the velocity field, $P$ is the scalar pressure, and $\Omega$ is a planar domain with a smooth boundary $\partial\Omega$. 
For a solid, impermeable boundary, the flow satisfies the following slip boundary condition
\begin{align}\label{slip boundary condition}
\bv\cdot\bn=0\ \ {\rm on}\ \partial\Omega,
\end{align}
where $\bn$ denotes the outward unit normal to $\partial\Omega$.

The system \eqref{steady_Euler} itself is one of the most important nonlinear PDEs as a macroscopic model to describe the steady motion of an inviscid incompressible fluid. Furthermore, the solutions of \eqref{steady_Euler} can be regarded as the steady states of the unsteady Euler system
\begin{eqnarray}\label{unsteady_Euler}
\left\{\begin{aligned}
&\partial_t\bv+\bv\cdot\nabla\bv+\nabla P=0,\\
&\Div\,\bv=0,
\end{aligned}\right.
\end{eqnarray}
which plays a crucial role in understanding the motion of inviscid incompressible fluids. It is well-known that in two dimensions, system \eqref{unsteady_Euler} admits a unique global solution for smooth initial and boundary conditions \cite{Holder_1933,Wolibner_1933}. However, its long-time behavior remains a major open problem in fluid PDEs \cite{Shnirelman_P_2013,Sverak_note_2011}. Characterizing all steady Euler flows could be helpful in this endeavor because they represent the possible "end states" of unsteady flows. Furthermore, a compelling reason to study steady Euler flows is their relevance to flows at high Reynolds number, where viscosity is confined to the boundary and the bulk is effectively inviscid. Notably, Prandtl's pioneering boundary layer theory \cite{Prandtl_1904} was originally developed in the setting of two-dimensional (2D) steady flows.

The difficulty in characterizing solutions to \eqref{steady_Euler} lies in the fact that the system may admit an infinite-dimensional family of solutions under the boundary condition \eqref{slip boundary condition}. The simplest example is the parallel shear flow, given by $\bv=(v_1(x_2),0)$ coupled with constant pressure. Another canonical family of solutions is the circular flow of the form 
\[
\bv(x)=V(|x|)\frac{(-x_2,x_1)}{|x|}\quad 
\text{and}\quad  P(x)=\int_{1}^{|x|}\frac{V^2(r)}{r}\,dr
\]
for an arbitrary function $V$. Furthermore, the 2D autonomous semilinear elliptic equation is a subclass of \eqref{steady_Euler} in the following sense: if $u$ is a solution to the equation $\Delta u=F'(u)$ in a planar domain, then 
\begin{align*}
\bv=\nabla^\perp u\vcentcolon=(-\partial_{x_2}u,\partial_{x_1}u),\ 
P=F(u)-\frac12|\nabla u|^2
\end{align*}
solve \eqref{steady_Euler}. System \eqref{steady_Euler} also has a solution with compact support, whose stream function solves a non-autonomous semilinear elliptic equation yet fails to be locally radially symmetric \cite{EFR_arXiv_2024}.

The abundance of solutions to \eqref{steady_Euler} raises the fundamental problem of selecting those that are physically realizable. 
While recent results on nonlinear inviscid damping \cite{BM_IHES_2015,IJ_Acta_2023,MZ_Annals_2024} show that some specific shear flows survive the long-time evolution of the Euler equations, it remains unclear whether more steady states enjoy such stability. In this paper, we adopt another natural approach to select steady Euler flows by taking the vanishing viscosity limit of solutions to the steady Navier–Stokes equations.
In fact, the vanishing viscosity limit of the Navier–Stokes equations has also been proposed as a selection principle for weak solutions of the unsteady Euler equations, see \cite{BTW_Paris_2012} and references therein.

To be specific, let us consider the steady incompressible Navier-Stokes equations
\begin{eqnarray}\label{steady_NS}
\left\{\begin{aligned}
&-\nu\Delta\bv^\nu+\bv^\nu\cdot\nabla\bv^\nu+\nabla P^\nu=0, & {\rm in}\ \Omega,\\
&\Div\,\bv^\nu=0, & {\rm in}\ \Omega.
\end{aligned}\right.
\end{eqnarray}
Here, the parameter $\nu>0$ denotes the viscosity coefficient, or equivalently, the reciprocal of the Reynolds number.
In his seminal paper \cite{Prandtl_1904} in 1904, Prandtl  made two closely related discoveries concerning steady flows at high Reynolds numbers. 
On the one hand, he introduced boundary-layer theory, explaining how viscosity may remain relevant in a thin layer adjacent to a solid boundary while the bulk flow is effectively inviscid. 
This idea initiated the vast theory of Prandtl boundary layers and the analysis of vanishing viscosity limit for solutions of Navier-Stokes equations; see, for example, \cite{OS_book_1999,GN_APDE_2017,GM_ARMA_2019,GI_CPAM_2023, Schlichting, Gao_Zhang,IM_AIA_2021,IM_FMP_2026} and the references therein. 
On the other hand, Prandtl also observed that, in a 2D steady flow at high Reynolds number, the vorticity remains constant within the closed streamline region.
It is this second insight that is most directly related to the selection problem studied in the present paper.
Prandtl's constant-vorticity principle was rediscovered by Batchelor in 1956 \cite{Batchelor_JFM_1956} and is now known as the Prandtl-Batchelor theorem. 
The selection of the limiting vorticity value is addressed in \cite{Batchelor_JFM_1956,FL_conference_1956,Wood_JFM_1957,Riley_JEM_1981,Kim_PF_1999,KC_SIAM_2001,van_FDR_2007,DIN_ARMA_2024}.

Two further issues require attention regarding the Prandtl-Batchelor theorem. First,  from a mathematical point of view, the assumption of the Prandtl-Batchelor theorem that all streamlines are closed requires justification. This important issue is one of the major motivations of our work. Second,
the topology in which $\bv^\nu$ converges to $\bv$ needs to be addressed. Numerical evidence suggests that the vanishing viscosity limit under weak convergence may select weak solutions of the Euler system, such as vortex sheets \cite{Okamoto_JDDE_1996,KO_AML_2023}.  As a first step toward the selection of general Euler flows, we focus here on local strong convergence in the vanishing viscosity limit, leaving the study of weak convergence for future work.

Assuming appropriate boundary conditions on $\bv^\nu$ and $\bv$, together with the convergence of $\bv^\nu$ to $\bv$ in some strong sense, the flow $\bv^\nu$ would possess closed streamlines if $\bv$ does.
For a $C^1$ solenoidal velocity field $\bv$ in a bounded connected domain satisfying \eqref{slip boundary condition}, the Morse-Sard theorem together with the coarea formula (applied to the stream function of $\bv$) implies that almost every streamline is closed (see Lemma \ref{lemma_null}).
In general domains, a steady Euler flow (such as a parallel shear flow) can have no closed streamlines. However, the authors \cite{GXX_arXiv_2024} recently discovered that every non-shear steady Euler flow in certain planar domains is not too far from having closed streamlines. Precisely, it was established in \cite{GXX_arXiv_2024} that for any bounded Euler flow $\bv$ in $\mbR^2$ or the periodic strip $\mbT\times(-1,1)$, the mapping $\frac{\bv}{|\bv|}$ must be onto the unit circle $\mbS^1$, unless $\bv$ is a parallel shear flow. This rigidity theorem generalizes 2D De Giorgi's conjecture \cite{GG_MA_1998}, analogous to how Little Picard theorem generalizes Liouville theorem for entire function and Osserman-Xavier-Fujimoto theorem \cite{Osserman_Annals_1964,Xavier_Annals_1981,Fujimoto_JMSJ_1988} generalizes Bernstein's theorem \cite{Bernstein_MZ_1927} for minimal surfaces.

Nevertheless, a bounded non-shear steady Euler flow in $\mbR^2$ may have no stagnation points, and thus no closed streamlines \cite{DR_CVPDE_2025}. 
In the strip $\mbT\times(-1,1)$, however, more recent rigidity results support the existence of (contractible) closed streamlines in non-shear flows.
Hamel and Nadirashvili \cite{HN_CPAM_2017} proved that for a steady Euler flow strictly away from stagnation points in $\mbR\times(-1,1)$, its streamlines must foliate the entire strip, which further forces all streamlines to be straight. Subsequently, it was shown in \cite{DR_CVPDE_2025,GRXX_arXiv_2025} that there exist non-stagnant, non-shear Euler flows in $\mathbb{R}\times(-1,1)$ that have infinitely long streamlines making a 180-degree turn. However, such streamlines cannot exist for steady Euler flows in $\mathbb{T}\times(-1,1)$. Recently, the results by Hamel and Nadirashvili were generalized in \cite{LLSX_arXiv_2022} where the stagnation appears on the boundary of the strip, and in \cite{DN_arXiv_2024} to the periodic setting under the {\it a priori} assumption that the flow is laminar while allowing stagnation points.

In this work, we take a step forward and give an affirmative answer to the existence of contractible closed streamlines in non-shear steady Euler flows in $\mathbb{T}\times(-1,1)$. The strength of this rigidity theorem is already evident from the fact that even its weaker form (i.e., the rigidity theorem in \cite{GXX_arXiv_2024}) is sufficient to imply the structural stability of shear flows with a convex profile \cite{GXX_arXiv_2024}.
Here, this enhanced rigidity theorem will play a pivotal role in the complete classification of vanishing viscosity limits in $\mathbb{T}\times(-1,1)$.
Lastly, we refer the reader to \cite{CDG_CMP_2021,CEW_ARMA_2023,DER_arXiv_2025,EHSX_arXiv_2024,GPSY_Duke_2021,HN_ARMA_2019,HN_JEMS_2023,LZ_ARMA_2011,Ruiz_ARMA_2023,WZ_arXiv_2023} for more rigidity results of steady Euler flows and to \cite{DG_PAMS_2024,DGN_arXiv_2025} for the existence of closed streamlines in Arnold-stable Euler flows.


\subsection{Main results}
Let us first give the precise definition of a vanishing viscosity limit.

\begin{definition}\label{defn_vvl}
Let $\Omega$ be a smooth planar domain. 
Let $\bv\in C^{2}(\Bar{\Omega})$ and $P\in C^{2}(\Bar{\Omega})$ solve \eqref{steady_Euler}-\eqref{slip boundary condition}, and $\bv^\nu\in C^2(\Omega)$ and $P^\nu\in C^1(\Omega)$ be a sequence of solutions to \eqref{steady_NS}.
Then $\bv$ is said to be a vanishing viscosity limit of $\{\bv^\nu\}$ in $\Omega$, if $\bv^\nu$ converges to $\bv$ in $L^\infty_{\textup{loc}}\cap H^1_{\textup{loc}}(\Omega)$ as $\nu$ tends to zero.
\end{definition}

The main results of this paper can be stated as follows.
\begin{theorem}\label{main_thm}
Suppose that $\Omega$ is a smooth domain in $\mathbb{R}^2$ and that $\bv\in C^{2}(\Bar{\Omega})$ and $P\in C^{2}(\Bar{\Omega})$ solve \eqref{steady_Euler}-\eqref{slip boundary condition}. 
Let $\bv^\nu\in C^2(\Omega)\cap C(\Bar{\Omega})$ and $P^\nu\in C^1(\Omega)$ be a sequence of solutions to \eqref{steady_NS}.
\begin{enumerate}[label=\textup{(\Alph*)}]
\item \label{thm_A} Suppose $\Omega$ is a bounded connected domain of the form
\begin{align}\label{defn_Omega}
\Omega=\Omega_0\setminus\left(\bigcup_{j=1}^{J}\overline{\Omega_j} \right),\quad \overline{\Omega_j} \subset\Omega_0,\ j=1,2,\cdots,J,
\end{align}
where $J$ is a nonnegative integer; for $J=0$, $\Omega=\Omega_0$; for $J\geq 1$, each $\Omega_j$ ($j=0,1,\cdots,J$) is a simply-connected bounded domain enclosed by a smooth Jordan curve $\bGm_j$, and $\{\overline{\Omega_j}\}_{j=1}^J$ are pairwise disjoint.
If $\Omega$ is multiply connected (i.e., $J\ge1$), we further assume the zero-flux condition on each boundary component
\begin{align}\label{zero_flux_bdd}
\int_{\bGm_j}\bn\cdot\bv^\nu \,ds=0, \quad j=0,1,\cdots,J,
\end{align}
where $\bn$ is the outward unit normal to $\partial\Omega$.
Then $\bv$ is a vanishing viscosity limit of $\{\bv^\nu\}$ in $\Omega$ in the sense of Definition \ref{defn_vvl} if and only if its vorticity $\omega\coloneqq\partial_{x_1}v_2-\partial_{x_2}v_1$ is constant.

\item \label{thm_B} If $\Omega =\mathbb{R}\times (-1,1)$ and $\bv$ is periodic in $x_1$, then $\bv$ is either a parallel shear flow or possesses contractible closed streamlines.
    
\item \label{thm_C}  Suppose $\Omega =\mathbb{R}\times (-1,1)$. If for all $\nu>0$,  $\bv^\nu=(v_1^\nu,v_2^\nu)$ are periodic in $x_1$ with the same period $T>0$, and satisfy the zero-flux boundary condition
\begin{align}\label{zero_flux_strip}
\int_{0}^{T}v_2^\nu(x_1,-1)\,dx_1=0, 
\end{align}
then $\bv$ is a vanishing viscosity limit of $\{\bv^\nu\}$ in $\Omega$ in the sense of Definition \ref{defn_vvl} if and only if $\bv=(c_0+c_1 x_2+c_2 x_2^2,0)$ for some constants $c_0,c_1,$ and $c_2$.
\end{enumerate}
\end{theorem}

There are several remarks in order.

\begin{remark}
The "if" parts in both Part \ref{thm_A} and Part \ref{thm_C} are straightforward. In fact, any flow with constant vorticity is a solution to both the Euler system \eqref{steady_Euler} and the Navier-Stokes system \eqref{steady_NS}. In a strip, any Euler flow of the form $\bv=(c_0+c_1x_2+c_2x_2^2,0)$ is the limit of the Navier-Stokes solutions $\bv^\nu=\bv$, $P^\nu=2\nu c_2 x_1$.
\end{remark}

\begin{remark}
The boundary condition \eqref{zero_flux_bdd} is necessary and sufficient for the existence of a stream function for $\bv^\nu$ in a multiply connected domain, whereas \eqref{zero_flux_strip} is necessary and sufficient for the periodic velocity $\bv^\nu$ to admit a periodic stream function in the strip. It goes without saying that \eqref{zero_flux_bdd} and \eqref{zero_flux_strip} are implied by the slip boundary condition \eqref{slip boundary condition}. However, neither the zero-flux boundary condition for a bounded connected domain nor the slip boundary condition in a periodic strip alone can imply that $\bv^\nu$ has a closed streamline.
\end{remark}

\begin{remark}
That $\bv^\nu$ converges to $\bv$ only on compact subsets of $\Omega$ is a reasonable assumption under which a Prandtl boundary layer may appear.
Indeed, when $\Omega$ is a disk or a periodic strip, there exists a sequence of solutions to \eqref{steady_NS} satisfying the slip boundary condition (hence \eqref{zero_flux_bdd} or \eqref{zero_flux_strip}) and featuring strong boundary layers, which converges to Couette flow at least in $L^\infty_{\textup{loc}}\cap H^1_{\textup{loc}}$ (see \cite{FCLT_CMP_2023,FPZ_arXiv_2023}).
\end{remark}

\begin{remark}
More steady Euler solutions could be selected as vanishing viscosity limits if we modify Definition \ref{defn_vvl}.
First, this becomes possible when an external force of the form $\bbf^\nu=\nu \bbf$ (with fixed $\bbf$) is applied to the viscous flow; insightful theoretical and numerical results can be found in \cite{OS_JIAM_1993,Okamoto_JDDE_1996,KO_AML_2023,FCLT_AIM_2024} and the references therein.   
However, Parts \ref{thm_A} and \ref{thm_C} still hold if the external force satisfies $\bbf^\nu=o(\nu)$ (as $\nu\rightarrow0^+$) (see Propositions \ref{prop_A} and \ref{prop_B}).
Second, a broader class of Euler solutions may be selected if the regularity required in Definition \ref{defn_vvl} is relaxed (see, e.g., \cite{Okamoto_JDDE_1996,KO_AML_2023}). This issue is also intimately related to the 2D Leray's problem, whose resolution relies on the analysis of $H^1$-weak solutions to the Euler system that arise as blow-up limits of a scaled Navier–Stokes system in bounded domains \cite{KPR_Annals_2015}.
\end{remark}

\begin{remark}
Note that $P^\nu$ in Part \ref{thm_C} is not required to be periodic in $x_1$. If $P^\nu$ is also periodic in $x_1$, the proof of Part \ref{thm_A} would apply so that $\omega$ has to be a constant, and thus the flow profile in Part \ref{thm_C} reduces to $\bv=(c_0+c_1 x_2,0)$.

In contrast, suppose a steady Euler flow $\bv$ in $\mathbb{R} \times [-1,1]$ satisfies the slip boundary condition \eqref{slip boundary condition} and is periodic in $x_1$. Then Bernoulli’s law asserts that $\partial_{x_1}\big(\frac12 v_1^2 + P\big)(x_1,\pm1)=0$ for all $x_1 \in \mathbb{R}$. Hence the boundary pressure $P(x_1,\pm1)$ is also periodic. This together with the periodicity of $\nabla P$ further implies that $P$ is periodic in the entire strip.
\end{remark}

\begin{remark}
The periodicity assumption on $\bv$ in Part \ref{thm_B} cannot be dropped, as there exist many smooth, bounded, non-shear steady Euler flows with a strictly positive vertical velocity component in $\mbR\times(-1,1)$ (see \cite{GXX_arXiv_2024,DR_CVPDE_2025,GRXX_arXiv_2025}). 
\end{remark}

\begin{remark}
The regularity requirement for $\bv$ and $P$ in Part \ref{thm_B} can be relaxed. For example, it suffices to have $\bv \in C^1(\mbR\times[-1,1])$ and $P \in C^1(\mbR\times[-1,1])$ (see Section \ref{sec_rigidity}).
\end{remark}

\begin{remark}
A non-circular steady Euler flow in an annulus need not have a contractible closed streamline. For example, a flow in an annulus that is circular only within two disjoint, non-contractible, non-concentric subannuli and stagnant otherwise is a non-circular Euler flow without contractible closed streamlines. Such a flow is said to be locally circular and its stream function is locally radially symmetric (see Figure \ref{local_circular}). 
\end{remark}


\begin{figure}[htbp]
\centering
\begin{tikzpicture}[scale=0.9, flow/.style={-{Stealth[length=2mm]}, thick, draw=blue!70!black},]

\begin{scope}
    \fill[gray!20] (0,0) circle (2);
\end{scope}

\begin{scope}
    \fill[blue!10] (0.1,0.1) circle (1.7);
\end{scope}

\begin{scope}
    \fill[gray!20] (0.1,0.1) circle (1.4);
\end{scope}

\begin{scope}
    \fill[blue!10] (-0.1,-0.1) circle (0.8);
\end{scope}

\begin{scope}
    \fill[gray!20] (-0.1,-0.1) circle (0.59);
\end{scope}

\begin{scope}
    \fill[white] (0,0) circle (0.3);
\end{scope}
    
\draw[thick, black] (0,0) circle (2);
\draw[thick, black] (0,0) circle (0.3);

\begin{scope}[shift={(0.1,0.1)}]
    \draw[thick, black!70!blue, dashed] (0,0) circle (1.7);
    \draw[thick, black!70!blue, dashed] (0,0) circle (1.4);
\end{scope}

\begin{scope}[shift={(0.1,0.1)}]
    \foreach \r in {1.4, 1.5, 1.6, 1.7} {
        \draw[flow, decoration={markings, mark=at position 0.2 with {\arrow{Stealth[length=1.5mm]}}}, postaction={decorate}] 
        (0,0) circle (\r);
    }
\end{scope}

\begin{scope}[shift={(-0.1,-0.1)}]
    \draw[thick, black!50!blue] (0,0) circle (0.8);
    \draw[thick, black!50!blue] (0,0) circle (0.59);
\end{scope}

\begin{scope}[shift={(-0.1,-0.1)}]
    \foreach \r in {0.59, 0.66, 0.73, 0.8} {
        \draw[flow, decoration={markings, mark=at position 0.2 with {\arrow{Stealth[length=1.5mm]}}}, postaction={decorate}] 
        (0,0) circle (\r);
    }
\end{scope}
\end{tikzpicture}

\caption{Locally circular flow without contractible closed streamlines in an annulus, the grey region consists of stagnation points.}
\label{local_circular}
\end{figure}


\subsection{Key ideas in the proof of the main results}
While Hamel and Nadirashvili \cite{HN_CPAM_2017,HN_ARMA_2019,HN_JEMS_2023} demonstrated the rigidity of 2D steady Euler flows either in the absence of stagnation points or when the stagnation set has a simple structure, the analysis becomes significantly more challenging when general stagnation points are present, as streamline behavior near them can be highly complex. Our approach is based on a key observation that the set of these chaotic streamlines is "small".

To be precise, let us introduce the stream function $u$ of $\bv$, so that $\bv=\nabla^\perp u$. By the slip boundary condition \eqref{slip boundary condition}, $u$ is constant on every component of the physical boundary. Denote by $\msC(u)$ and $\msR(u)$ the sets of critical values and regular values of $u$, respectively, i.e.,
\[
\msC(u)=\{u(x):x\in\Bar{\Omega}, |\nabla u(x)|=0 \},\quad
\msR(u)=\{u(x): x\in\Bar{\Omega}\}\setminus\msC(u).
\]
Then we decompose the domain into the disjoint union
\begin{align}\label{starting_point}
\Bar{\Omega}=u^{-1}(\msR(u))\cup \left(u^{-1}(\msC(u))\cap \{|\nabla u|>0 \}\right) \cup \{|\nabla u|=0 \}.
\end{align}
The Morse-Sard theorem and the coarea formula imply that $u^{-1}(\msC(u))\cap \{|\nabla u|>0 \}$ is an $\mcH^2$-null subset of $\mbR^2$. As a consequence, our primary analysis will focus on the set $u^{-1}(\msR(u))$.

\subsubsection{Prandtl-Batchelor theorem in a bounded connected domain}
By \eqref{starting_point}, we see that $\nabla\omega=0$ $\mcH^2$-a.e. in $\Omega\setminus u^{-1}(\msR(u))$.
Theorem \ref{main_thm}-\ref{thm_A} will follow once we prove that $\nabla\omega=0$ in $u^{-1}(\msR(u))$. Given that $u$ is constant on every component of $\partial\Omega$, the level set $u^{-1}(r)$ for every $r\in\msR(u)\setminus\{u|_{\bGm_j}:0\le j\le J \}$ consists of finitely many Jordan curves in $\Omega$. Let $\Gamma$ be any such Jordan curve. One can find a closed streamline $\Gamma^\nu$ of $\bv^\nu$ arbitrarily close to $\Gamma$. Now taking the dot product between $\frac{\bv^\nu}{|\bv^\nu|}$ and the momentum equation of \eqref{steady_NS} yields
\begin{align*}
\frac{\bv^\nu}{|\bv^\nu|}\cdot\nabla \left(\frac12|\bv^\nu|^2+P^\nu\right)
=\nu \frac{\bv^\nu}{|\bv^\nu|}\cdot\nabla^\perp\omega^\nu,   
\end{align*}
where $\omega^\nu$ denotes the vorticity of $\bv^\nu$. Next, integrating the resulting equation along $\Gamma^\nu$ gives
\begin{align}\label{integral_constraint_epsl}
\int_{\Gamma^\nu}\frac{\bv^\nu}{|\bv^\nu|}\cdot\nabla^\perp\omega^\nu \,ds
=0.   
\end{align}
Formally, passing to the limit as $\nu \rightarrow 0^+$ leads to
\begin{align}\label{integral_constraint}
\int_{\Gamma}\frac{\bv}{|\bv|}\cdot\nabla^\perp \omega\,ds=0.
\end{align}
This, along with the vorticity equation
\begin{equation}\label{Euler_vorticity}
\bv\cdot\nabla\omega=0,
\end{equation}
implies that $\nabla\omega=0$ in $u^{-1}(\msR(u))$. 
However, the convergence from \eqref{integral_constraint_epsl} to \eqref{integral_constraint} is nontrivial under the assumption that $\bv^\nu\rightarrow \bv$ merely in $L^\infty_{\textup{loc}}\cap H^1_{\textup{loc}}$.
We will justify this in Section \ref{sec_selection_bdd} by integrating the left-hand side of \eqref{integral_constraint_epsl} twice with respect to the level value of the stream function of $\bv^\nu$, which yields an integral that converges under the assumption on the $L^\infty_{\textup{loc}}\cap H^1_{\textup{loc}}$ convergence.

\subsubsection{Total curvature decomposition} To prove Theorem \ref{main_thm}-\ref{thm_B}, we show that if $\bv$ does not have any contractible closed streamlines, then the {\it total curvature} is trivial, i.e., 
\begin{align}\label{trivial_total_curvature}
\iint_{\mbT\times(-1,1)}(|\nabla\bv|^2-|\nabla|\bv||^2)\,dx=0.
\end{align}
The name of the above integral is attributed to the fact that
\begin{align*}
|\nabla\bv|^2-|\nabla|\bv||^2=|\bv|^2(\kappa^2+\tau^2), 
\end{align*}
where $\kappa$ is the curvature of the streamline and $\tau$ is the curvature of the curve perpendicular to the streamline. The total curvature can be estimated by reformulating \eqref{Euler_vorticity} as
\begin{equation}\label{Euler_divergence_form}
\Div(v_1\nabla v_2-v_2\nabla v_1)=0.    
\end{equation}
If $\bv$ admits a phase function $\theta$ with $\frac{\bv}{|\bv|}=(\cos\theta,\sin\theta)$, it follows from \eqref{Euler_divergence_form} that $\theta$ solves
\begin{align}\label{elliptic_phase}
\Div(|\bv|^2\nabla\theta)=0.    
\end{align}
Hamel and Nadirashvili \cite{HN_ARMA_2019} used the above equation as the starting point to prove their rigidity theorem for steady Euler flows in $\mbR^2$ that are strictly away from stagnation.
In previous work \cite{GXX_arXiv_2024}, under the assumption that $\bv/|\bv|$ omits a value in $\mbS^1$, the authors derived an approximating equation for \eqref{elliptic_phase}. This led to estimates for the total curvature and, consequently, to rigidity results for steady Euler flows in certain unbounded domains.

However, equation \eqref{elliptic_phase} may fail to hold globally under the sole assumption of no contractible closed streamlines. Our key observation is that under this assumption, $u^{-1}(\msR(u))$ can be decomposed into at most countably many curved periodic strips $\{S_j\}$, in each of which $\bv$ admits a phase $\theta_j$ satisfying \eqref{elliptic_phase}. Applying energy estimates to \eqref{elliptic_phase} and Bernoulli’s law, we find that
\begin{align*}
\iint_{S_j}(|\nabla \bv|^2-|\nabla|\bv||^2)\,dx
=-\frac12\iint_{S_j}\Delta P\,dx.
\end{align*}
Since $u^{-1}(\msC(u))\cap \{|\nabla u|>0 \}$ is $\mcH^2$-null, and 
\begin{align*}
|\nabla \bv|^2-|\nabla|\bv||^2=0 \ \ {\rm and} \ \ \Delta P=0,\quad \mcH^2\textup{-a.e.} \ {\rm in}\    \{|\nabla u|=0 \},
\end{align*}
it holds that
\begin{align*}
\iint_{\mbT\times(-1,1)}(|\nabla \bv|^2-|\nabla|\bv||^2)\,dx
=-\frac12\iint_{\mbT\times(-1,1)}\Delta P\,dx.
\end{align*}
Finally, it follows from \eqref{steady_Euler}-\eqref{slip boundary condition} that the integral on the right vanishes, and thus \eqref{trivial_total_curvature} holds.

\subsubsection{Propagation of constant vorticity} 
Under the assumptions of Theorem \ref{main_thm}-\ref{thm_C}, analogous to \eqref{integral_constraint}, there exists a constant $b$ such that 
\begin{align}\label{line_integral_b}
\int_{\nGm}\bn\cdot\nabla \omega\,ds=b,
\end{align}
where $\nGm$ is any non-contractible closed streamline of $\bv$, and $\bn$ is the upward-pointing unit normal to $\nGm$. 
The difficulty is that $b$ is not necessarily zero because $P^\nu$ is not necessarily periodic in $x_1$.

If $\bv$ does not have any contractible closed streamline, it follows from Theorem \ref{main_thm}-\ref{thm_B} that $\bv$ must be a shear flow. Then it is easy to get from \eqref{line_integral_b} that $\bv=(c_0+c_1 x_2+c_2 x_2^2,0)$ for some constants $c_0,c_1,$ and $c_2$.

If $\bv$ has a contractible closed streamline $\cGm$, we infer from Theorem \ref{main_thm}-\ref{thm_A} that $\omega$ is constant in $\textup{Int}(\cGm)$. 
This region can be extended to a maximal constant-vorticity domain $D$, whose boundary is approximated by a sequence of contractible closed streamlines inside $D$. Then by Hopf's lemma, there exists a point $z\in\partial D\cap (\mbT\times(-1,1))$ such that $|\bv(z)|>0$. We can show that the constancy of $\omega$ in $D$ propagates across $z$ to the entire strip. Indeed, for any sufficiently small disk $B_{\delta_0}(z)$ centered at $z$ with radius $\delta_0>0$, it follows from \eqref{starting_point} that almost all points of $B_{\delta_0}(z)\setminus\Bar{D}$ belong to $u^{-1}(\msR(u))$. By the maximality of $D$, $B_{\delta_0}(z)\setminus\Bar{D}$ cannot contain any point on a contractible closed streamline. Hence, any small neighborhood of $z$ must contain a point lying on a non-contractible closed streamline. This forces the constant $b$ in \eqref{line_integral_b} to be zero, implying that $\omega$ is constant in the entire strip.
Finally, the only constant-vorticity flow in a periodic strip is the shear flow $(c_0+c_1 x_2,0)$, which contradicts the existence of a contractible closed streamline.

\subsection{Organization of the paper} 
The rest of the paper is organized as follows. Section \ref{sec_pre} collects the necessary preliminaries. Since the proofs of Parts \ref{thm_A} and \ref{thm_C} of Theorem \ref{main_thm} share many of the same techniques, while the proof of Part \ref{thm_C} relies on both Parts \ref{thm_A} and \ref{thm_B} (and Part \ref{thm_B} is independent of Part \ref{thm_A}), we first prove Part \ref{thm_B} in Section \ref{sec_rigidity}. The proofs of Parts \ref{thm_A} and \ref{thm_C} are then given in Sections \ref{sec_selection_bdd} and \ref{sec_selection_strip}, respectively.

\subsection{Notation}
Throughout this paper, $\mbT$ denotes the 1-torus $\mbR/2\pi\mbZ$.
For a vector $(a,b)$, define $(a,b)^\perp=(-b,a)$, and for a scalar function $u$, $\nabla^\perp u=(\nabla u)^\perp$. If $u$ is defined on a domain $\Omega$, $u(\Omega)$ denotes its range; for a subset $I\subset u(\Omega)$, $u^{-1}(I)$ denotes the preimage of $I$, and if $r\in u(\Omega)$, $u^{-1}(r)=\{x\in\Omega:u(x)=r\}$ is the $r$-level set of $u$.
Let $B_{\delta}(z)$ be the open disk of radius $\delta$ centered at $z$. For a Jordan curve $\Gamma$ on the plane, $\textup{Int}(\Gamma)$ means the bounded connected component of $\mbR^2\setminus\Gamma$.
Finally, $\mcH^n$ is the $n$-dimensional Hausdorff measure.


\medskip

\section{Preliminaries}\label{sec_pre}
This section is devoted to the analysis of the streamlines for both viscous and inviscid flows. 

Suppose $\Omega$ is a planar open domain.
For a function $u\in C^2(\bar{\Omega})$, denote by $\msC(u)$ the set of critical values of $u,$ and $\msR(u)$ the set of regular values of $u$; that is,
\[
\msC(u)=\{u(x):x\in\Bar{\Omega}, |\nabla u(x)|=0 \},\quad
\msR(u)=u(\Bar{\Omega})\setminus\msC(u).
\]
The Morse-Sard theorem says that $\msC(u)$ is an $\mcH^1$-null subset of $\mbR$. Although $u^{-1}(\msC(u))$ may not be small, it is well-known that $u^{-1}(\msC(u))\cap \{|\nabla u|>0 \}$ is an $\mcH^2$-null subset of $\mbR^2$.
This fact plays a crucial role in the proof of Theorem \ref{main_thm}. We shall provide the proof for the reader's convenience.

\begin{lemma}\label{lemma_null}
For every $\mcH^1$-null subset $\msN$ of $u(\Bar{\Omega})$, $u^{-1}(\msN)\cap\{|\nabla u|>0 \}$ is an $\mcH^2$-null subset of $\Bar{\Omega}$. In particular, $u^{-1}(\msC(u))\cap\{|\nabla u|>0 \}$ is an $\mcH^2$-null subset of $\Bar{\Omega}$.
\end{lemma}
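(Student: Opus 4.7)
My plan is to reduce the claim to a one-line consequence of the coarea formula applied to $u \in C^2(\bar{\Omega})$. I would set $E := u^{-1}(\msN) \cap \{|\nabla u| > 0\}$ and substitute $g = \chi_E$ into the standard identity
\[
\int_\Omega g(x) |\nabla u(x)| \, dx = \int_{\mbR} \int_{u^{-1}(t)} g(x) \, d\mcH^1(x) \, dt.
\]
Because $u^{-1}(t) \cap E$ is empty whenever $t \notin \msN$, the outer integral on the right collapses to one over $\msN$, which has $\mcH^1$-measure zero. Hence $\int_E |\nabla u| \, dx = 0$; combined with the strict positivity $|\nabla u| > 0$ on $E$, this forces $\mcL^2(E) = 0$, which in the plane coincides with $\mcH^2(E) = 0$. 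The boundary $\partial\Omega$ is a finite union of smooth Jordan curves, hence already $\mcH^2$-null, so it can be absorbed into the null set without further comment. The "consequently" clause follows by specializing $\msN := \msC(u)$, which is $\mcH^1$-null by the classical Morse–Sard theorem for $C^2$ functions on a two-dimensional domain.

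Should I wish to avoid invoking the coarea formula as a black box, an essentially equivalent route is to straighten level curves locally: at any $x_0$ with $|\nabla u(x_0)| > 0$, the implicit function theorem furnishes a neighborhood $U_{x_0}$ and a $C^1$ diffeomorphism $\Phi : U_{x_0} \to V_1 \times V_2$ with $u \circ \Phi^{-1}(y_1,y_2) = y_1$. The set $u^{-1}(\msN) \cap U_{x_0}$ is then the $\Phi$-preimage of $(\msN \cap V_1) \times V_2$, which is $\mcH^2$-null by Fubini since $\msN$ is $\mcH^1$-null. Extracting a countable subcover of $\{|\nabla u| > 0\}$ (possible by second countability) and using subadditivity would finish the argument.

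I do not foresee a genuine obstacle here: both routes are textbook-level once the appropriate tool (coarea formula, or the implicit function theorem together with Fubini) is taken for granted. The only minor bookkeeping point is the handling of $\partial\Omega$, which is trivial in the planar setting. If anything, the subtle step worth double-checking is that the coarea formula genuinely applies to the indicator function $\chi_E$ of a set that could in principle be rather wild; this is fine because $\chi_E$ is Borel measurable, which is all the standard $C^1$ coarea formula requires.
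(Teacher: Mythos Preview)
Your primary argument is correct and is essentially the paper's own proof: both apply the coarea formula to conclude that the set has zero Lebesgue (hence $\mcH^2$) measure, and then invoke Morse--Sard for the second clause. Your execution is marginally slicker---by integrating $\chi_E\,|\nabla u|$ directly you avoid the paper's countable truncation to $\{|\nabla u|>1/k\}\cap B_k(0)$, which it introduces only so that $1/|\nabla u|$ is bounded on each piece.
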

\begin{proof}
We write
\begin{align*}
u^{-1}(\msN)\cap\{|\nabla u|>0 \}
=\bigcup_{k\in\mbN_+} u^{-1}(\msN)\cap\left\{|\nabla u|>\frac{1}{k} \right\}\cap B_{k},
\end{align*}
where $B_k =\{x\in\mbR^2:|x|<k\}$. 
It follows from the coarea formula (\hspace{1sp}\cite{EvansG}) that
\begin{equation*}
\begin{aligned}
\mcH^2\left(u^{-1}(\msN)\cap\{|\nabla u|>\frac{1}{k} \}\cap B_{k}\right)= &
\iint_{u^{-1}(\msN)\cap\{|\nabla u|>\frac{1}{k} \}\cap B_{k}} 1\,dx\\
= &\int_{\msN}\int_{u^{-1}(s)\cap\{|\nabla u|>\frac{1}{k} \}\cap B_{k}}\frac{1}{|\nabla u|} \,d\mcH^1\,ds
=0.
\end{aligned}
\end{equation*}
The first statement of the lemma follows directly. Since $\msC(u)$ is $\mcH^1$-null by the Morse-Sard theorem, the second statement follows immediately as well.
\end{proof}

In the rest of this section, we address the existence of closed streamlines for the viscous velocity field $\bv^\nu$, a prerequisite for applying the Prandtl–Batchelor theory to characterize the vanishing viscosity limits. Assuming that the inviscid velocity field $\bv$ possesses a closed streamline $\Gamma$, we show that for sufficiently small $\nu$ the viscous field $\bv^\nu$ also admits closed streamlines near $\Gamma$, provided that $\bv^\nu$ admits a stream function (which is guaranteed by the zero-flux boundary condition \eqref{zero_flux_bdd} or \eqref{zero_flux_strip}) and that $\bv^\nu$ converges to $\bv$ in $L^\infty_{\text{loc}}$ as $\nu\rightarrow0^+$.

For the remainder of this section, $\Omega$ denotes a bounded connected domain as defined in \eqref{defn_Omega}. Nevertheless, the results presented here extend analogously to the case of the periodic strip $\mbT\times(-1,1)$.

\begin{lemma}\label{prelim_u_convergence}
Let $\bv, \bv^\nu\in C^1(\Omega)\cap C(\Bar{\Omega})$ be solenoidal vector fields, both satisfying the zero-flux boundary condition \eqref{zero_flux_bdd}, such that $\bv^\nu\rightarrow\bv$ in $L^\infty_{\textup{loc}}(\Omega)$ as $\nu\rightarrow0^+$. Let $u\in C^2(\Omega)$ be a stream function of $\bv$. Then there exist stream functions $u^\nu$ of $\bv^\nu$ such that $u^\nu\rightarrow u$ in $C^1_{\textup{loc}}(\Omega)$ as $\nu\rightarrow0^+$.
\end{lemma}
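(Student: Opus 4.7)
The plan is to construct each stream function $u^\nu$ by imposing a single normalization at a fixed basepoint $x_0\in\Omega$ and then upgrade the $L^\infty_{\textup{loc}}$ convergence $\bv^\nu\to\bv$ to $C^1_{\textup{loc}}$ convergence of $u^\nu$ via the fundamental theorem of calculus along uniformly bounded paths. Since $\bv^\nu$ is $C^1$ and $\Div\,\bv^\nu=0$, the 1-form $v_2^\nu\,dx_1-v_1^\nu\,dx_2$ is closed; assuming it is globally exact on $\Omega$, one obtains $u^\nu\in C^2(\Omega)$ with $\nabla^\perp u^\nu=\bv^\nu$, and I would normalize by requiring $u^\nu(x_0)=u(x_0)$. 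Observing that $\nabla^\perp u^\nu=\bv^\nu$ gives $\nabla u^\nu=(v_2^\nu,-v_1^\nu)$ and likewise $\nabla u=(v_2,-v_1)$, so that $|\nabla u^\nu-\nabla u|=|\bv^\nu-\bv|$ converges to zero uniformly on every compact subset of $\Omega$. This already gives the $C^0$-convergence of the gradients.

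To promote this to $C^0_{\textup{loc}}$ convergence of $u^\nu$ itself, for any compact $K\subset\Omega$ I would enlarge $K$ to a compact path-connected $K'\subset\Omega$ with $K\cup\{x_0\}\subset K'$ and pick a uniform bound $L$ on the length of a smooth path $\gamma_x\subset K'$ from $x_0$ to $x$, valid for every $x\in K$. Since $u^\nu(x_0)=u(x_0)$, the fundamental theorem of calculus yields
\begin{equation*}
|u^\nu(x)-u(x)|=\left|\int_{\gamma_x}\nabla(u^\nu-u)\cdot\btau\,ds\right|\le L\,\|\nabla u^\nu-\nabla u\|_{L^\infty(K')}\longrightarrow 0
\end{equation*}
uniformly in $x\in K$. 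Combined with the gradient estimate, this is exactly convergence in $C^1_{\textup{loc}}(\Omega)$.

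The main obstacle is ensuring the existence of a single-valued $u^\nu$ when $\Omega$ is multiply connected: one needs the flux $\oint_{\bGm_j}\bv^\nu\cdot\bn\,ds$ to vanish across each interior boundary component $\bGm_j$, which is not automatic from the hypotheses. Since $\bv$ admits a stream function, its own fluxes vanish, and the $L^\infty_{\textup{loc}}$ convergence along any Jordan curve $\gamma_j\subset\Omega$ enclosing $\bGm_j$ forces $\oint_{\gamma_j}\bv^\nu\cdot\bn\,ds\to 0$ as $\nu\to 0^+$. I would address the existence issue either by appealing to the structural context in which the lemma is later invoked (where the relevant fluxes vanish) or, more robustly, by subtracting from $\bv^\nu$ a smooth divergence-free corrector carrying the prescribed fluxes and converging to $0$ in $L^\infty_{\textup{loc}}$; the corrector does not affect any limit used in the above argument, and the result follows.
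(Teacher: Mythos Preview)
Your approach is essentially the same as the paper's: fix a basepoint $z\in\Omega$, normalize $u^\nu(z)=u(z)$, and bound $|u^\nu(x)-u(x)|$ by integrating $\nabla(u^\nu-u)=(\bv^\nu-\bv)^\perp$ along paths of uniformly bounded length inside a compact intermediate set (the paper uses a finite cover by disks and piecewise-linear paths, your abstract $K'$ with bound $L$ is the same device). The paper simply asserts ``let $u^\nu$ be the unique stream function of $\bv^\nu$'' without addressing the flux obstruction in multiply connected $\Omega$; your identification of this point is correct and is something the paper glosses over.
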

\begin{proof}
The existence of globally defined stream functions for $\bv$ and $\bv^\nu$ is guaranteed by the zero-flux boundary condition \eqref{zero_flux_bdd}.
Fix $z\in \Omega$ and let $u^\nu$ be the unique stream function of $\bv^\nu$ such that $u^\nu(z)=u(z).$ Given any subdomain $\Omega_1\Subset\Omega$ with $z\in \Omega_1$, choose a connected subdomain $\Omega_2$ such that $\Omega_1\Subset\Omega_2\Subset\Omega$. Then there exists a positive integer $m=m(\Omega_2)$ such that $\overline{\Omega_1}$ is covered by $m$ open disks, each of which is contained in $\Omega_2$. Now for any $x\in\Omega_1$, we can connect $x$ and $z$ with a continuous path that consists of at most $m$ line segments, each of which is contained in a covering disk. List the endpoints of the line segments in order: $x^0=x,x^1,\cdots,x^{n-1},x^n=z$, $n\le m.$ Since $(u^\nu-u)(x_n)=0$ and $|\nabla u^\nu-\nabla u|=|\bv^\nu-\bv|$, we have
\begin{align*}
|u^\nu(x)-u(x)|
=&\left| \sum_{j=1}^{n} [(u^\nu-u)(x^{j-1})
-(u^\nu-u)(x^j)]\right|\\
\le& m\cdot \textup{Diam}(\Omega_2)\cdot\|\bv^\nu-\bv\|_{L^\infty(\Omega_2)},
\end{align*}
where $\textup{Diam}(\Omega_2)$ denotes the diameter of $\Omega_2$. It immediately follows that $u^\nu\rightarrow u$ in $C^1(\overline{\Omega_1})$ as $\nu\rightarrow0^+$.
This proves the lemma.
\end{proof}

Let $u\in C^2(\Bar{\Omega})$.
Suppose, for some $r\in u(\Bar{\Omega})$, $\Gamma_r\subset u^{-1}(r)$ is a $C^2$ Jordan curve inside $\Omega$ on which $|\nabla u|>0$.
Let $0<d<\dist(\Gamma_r,\partial\Omega)$, and we will impose further restrictions on $d$ as needed. 
Let $\bn$ be the outward unit normal to $\Gamma_r$.
For $x\in\Gamma_r,$ define
\[
I(x,d)=\{x+t\bn(x):-d\le t\le d\}; \quad N(\Gamma_r,d)=\cup_{x\in\Gamma_r}I(x,d).
\]
Let $d$ be smaller than the normal injectivity radius of $\Gamma_r,$ so that $\{ I(x,d) \}_{x\in\Gamma_r}$ are disjoint, and $N(\Gamma_r,d)$ is a tubular neighborhood of $\Gamma_r$. For notational convenience, we define
\[
A_{t_1,t_2}(r)=\{r+t_1\le u\le r+t_2 \}\cap N(\Gamma_r,d),
\]
and when no ambiguity arises, we simply write $A_{t_1,t_2}=A_{t_1,t_2}(r)$.
Without loss of generality, we may assume that
\begin{align}\label{assump_dnu}
\bn\cdot\nabla u=|\nabla u|>0 \quad {\rm on}\ \Gamma_r.
\end{align}

\begin{lemma}\label{prelim_lemma}
Suppose $u\in C^2(\Bar{\Omega})$ and $u^\nu\in C^2(\Omega)$ satisfy that $u^\nu\rightarrow u$ in $C^1_{\textup{loc}}(\Omega)$ as $\nu\rightarrow0^+$. 
If $d$ satisfies
\begin{align}\label{small_d}
d<\frac{\min_{\Gamma_r}|\nabla u|}{2\|\nabla^2 u\|_{L^\infty(\Bar{\Omega})}},
\end{align}
then there exists a number $\delta_0>0$ such that the following statements hold.

\begin{enumerate}[label=\textup{(\roman*)}]
\item $A_{-2\delta_0,2\delta_0}$ is a tubular neighborhood of $\Gamma_r$ foliated by the level sets of $u$, that is, $A_{-2\delta_0,2\delta_0}=\bigcup_{|t|\le 2\delta_0}\Gamma_{r+t}$, where every $\Gamma_{r+t}\subset u^{-1}(r+t)$ is a $C^2$ Jordan curve, and $\Gamma_{r+t_1}\subset \textup{Int}(\Gamma_{r+t_2})$ whenever $-2\delta_0\le t_1<t_2\le 2\delta_0$.

\item For sufficiently small $\nu$ and for every $t\in[-\delta_0,\delta_0]$, $\Gamma_{r+t}^\nu\vcentcolon=(u^\nu)^{-1}(r+t)\cap N(\Gamma_r,d)$ is a $C^2$ Jordan curve and $A^\nu_{-\delta_0,\delta_0}$ is foliated by $\{ \Gamma^\nu_{r+t} \}_{|t|\le \delta_0}$, where 
\[
A^\nu_{t_1,t_2}=A^\nu_{t_1,t_2}(r)\coloneqq\{r+t_1\le u^\nu\le r+t_2 \}\cap N(\Gamma_r,d), \ for \ -\delta_0\le t_1\le t_2\le \delta_0.
\]

\item For $-\delta_0\le t_1< t_2\le \delta_0$, it holds that
\begin{align*}
\lim_{\nu\rightarrow 0}\mcH^2\left(A_{t_1,t_2}\triangle A^\nu_{t_1,t_2} \right)=0,
\end{align*}
where $\triangle$ denotes the symmetric difference of two sets, i.e.,
\[
A_{t_1,t_2}\triangle A^\nu_{t_1,t_2}
=\left( A_{t_1,t_2}\setminus A^\nu_{t_1,t_2} \right)\cup \left( A^\nu_{t_1,t_2}\setminus A_{t_1,t_2} \right).
\]
\end{enumerate}
\end{lemma}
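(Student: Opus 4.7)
The plan is to derive everything from a uniform lower bound on $\bn\cdot\nabla u$ throughout the tube $N(\Gamma_r,d)$, then transfer that bound to $u^\nu$ via the $C^1_{\textup{loc}}$ convergence. By a one-dimensional Taylor expansion along each normal segment starting from $x\in\Gamma_r$, together with \eqref{assump_dnu} and the smallness condition \eqref{small_d},
\[
\bn(x)\cdot\nabla u(x+t\bn(x))\ \ge\ \min_{\Gamma_r}|\nabla u|-\|\nabla^2u\|_{L^\infty(\bar{\Omega})}\,d\ >\ \tfrac12\min_{\Gamma_r}|\nabla u|>0\quad\text{for every }|t|\le d.
\]
In particular $|\nabla u|>\tfrac12\min_{\Gamma_r}|\nabla u|$ on the entire tube, and $u$ is strictly increasing along each normal segment $I(x,d)$.

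For part (i), I would choose $\delta_0$ so small that $2\delta_0<\tfrac12 d\,\min_{\Gamma_r}|\nabla u|$, so that $u(x-d\bn(x))\le r-2\delta_0$ and $u(x+d\bn(x))\ge r+2\delta_0$ for every $x\in\Gamma_r$. The intermediate value theorem and strict monotonicity then force $u^{-1}(r+t)\cap I(x,d)$ to reduce to a single point $x+s(x,t)\bn(x)$ for each $|t|\le 2\delta_0$, and the implicit function theorem gives $s\in C^2$. Hence $x\mapsto x+s(x,t)\bn(x)$ is a $C^2$ diffeomorphism from $\Gamma_r$ onto $\Gamma_{r+t}$, which is therefore a $C^2$ Jordan curve; the nesting $\Gamma_{r+t_1}\subset\textup{Int}(\Gamma_{r+t_2})$ for $t_1<t_2$ follows from $\bn$ being the outer normal and the monotonicity of $u$ along normals.

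For part (ii), by $C^1_{\textup{loc}}$ convergence I can pick $\nu$ so small that $\|u^\nu-u\|_{C^1(\overline{N(\Gamma_r,d)})}<\tfrac14\min_{\Gamma_r}|\nabla u|$. The lower bound on $\bn\cdot\nabla u$ then transfers to $\bn\cdot\nabla u^\nu$ on the tube, and the endpoint values $u^\nu(x\pm d\bn(x))$ still straddle $[r-\delta_0,r+\delta_0]$. Repeating the IVT-plus-IFT argument from part (i) produces $\Gamma^\nu_{r+t}$ as a $C^2$ Jordan curve for every $t\in[-\delta_0,\delta_0]$ and exhibits $A^\nu_{-\delta_0,\delta_0}$ as the disjoint union of these curves.

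For part (iii), let $\epsilon_\nu=\|u^\nu-u\|_{L^\infty(N(\Gamma_r,d))}\to 0$. Then $A^\nu_{t_1,t_2}\subset A_{t_1-\epsilon_\nu,t_2+\epsilon_\nu}$ and $A_{t_1,t_2}\subset A^\nu_{t_1-\epsilon_\nu,t_2+\epsilon_\nu}$, so the symmetric difference lies in
\[
A_{t_1-\epsilon_\nu,t_1}\cup A_{t_2,t_2+\epsilon_\nu}\cup A^\nu_{t_1-\epsilon_\nu,t_1}\cup A^\nu_{t_2,t_2+\epsilon_\nu}.
\]
The coarea formula applied to $u$ and to $u^\nu$, combined with the uniform lower bounds on $|\nabla u|$ and $|\nabla u^\nu|$ and the uniform boundedness of the lengths of the level curves $\Gamma_{r+s}$ and $\Gamma^\nu_{r+s}$, bounds the $\mcH^2$-measure of each of the four pieces by $C\epsilon_\nu$, and the conclusion follows. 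The main obstacle is essentially bookkeeping in part (iii); the one analytic point worth isolating is the uniform positivity of $|\nabla u^\nu|$ on the whole tube $N(\Gamma_r,d)$, but this is immediate from the $C^1_{\textup{loc}}$ convergence together with the uniform positivity of $|\nabla u|$ established at the outset.
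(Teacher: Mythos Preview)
Your proposal is correct and follows essentially the same route as the paper: the uniform lower bound on $\bn\cdot\nabla u$ via Taylor expansion and \eqref{small_d}, its transfer to $u^\nu$ by $C^1_{\textup{loc}}$ convergence, and the sandwiching of $A^\nu_{t_1,t_2}$ between slightly enlarged and shrunken $A$-sets are exactly the paper's arguments. The only cosmetic difference is in (iii): the paper fixes an auxiliary $\delta>0$, uses the containment $A_{t_1+\delta,t_2-\delta}\subset A^\nu_{t_1,t_2}\subset A_{t_1-\delta,t_2+\delta}$ to trap the symmetric difference inside $A_{t_1-\delta,t_2+\delta}\setminus A_{t_1+\delta,t_2-\delta}$ (only $u$-sets), and concludes by dominated convergence, whereas you track $\epsilon_\nu$ and invoke coarea on both $u$ and $u^\nu$; your version works too, though you could shorten it by observing $A^\nu_{t_1-\epsilon_\nu,t_1}\subset A_{t_1-2\epsilon_\nu,t_1+\epsilon_\nu}$ and thereby avoid the length bounds for $\Gamma^\nu_{r+s}$ altogether.
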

\begin{proof}
(\Rn{1}) 
For every $x\in \Gamma_r$ and every $y\in I(x,d),$ it follows from \eqref{assump_dnu} and \eqref{small_d} that
\begin{align}\label{normal_gradu}
\bn(x)\cdot\nabla u(y)\ge |\nabla u(x)|-\|\nabla^2 u\|_{L^\infty(\Bar{\Omega})}|y-x|
\ge \frac12 \min_{\Gamma_r}|\nabla u|>0.
\end{align}
This implies that $u$ is strictly monotone on $I(x,d)$ for every $x\in\Gamma_r$. As a consequence,
\begin{align*}
\delta_0\vcentcolon=\frac12 \min\left\{ r-\max_{x\in\Gamma_r}u(x-d\bn(x)),\min_{x\in\Gamma_r}u(x+d\bn(x))-r\right\}>0.
\end{align*}
With this choice of $\delta_0$, part (\Rn{1}) readily follows.  

(\Rn{2}) 
For sufficiently small $\nu,$ and for every $x\in\Gamma_r$ and $y\in I(x,d)$, by \eqref{normal_gradu} and the assumption that $u^\nu\rightarrow u$ in $C^1_{\textup{loc}}(\Omega)$, we get
\begin{align}\label{normal_graduep}
\bn(x)\cdot\nabla u^\nu(y)
\ge \bn(x)\cdot\nabla u(y)-\|\nabla u^\nu-\nabla u\|_{L^\infty(N(\Gamma_r,d))}
\ge \frac14\min_{\Gamma_r}|\nabla u|>0.
\end{align}
So $u^\nu$ is also strictly monotone on $I(x,d)$ for every $x\in\Gamma_r$.
On the other hand, for sufficiently small $\nu$ and for every $x\in \Gamma_{r-2\delta_0}$ we have
\[
u^\nu(x)-(r-2\delta_0)=u^\nu(x)-u(x)<\delta_0,
\]
which implies that $u^\nu< r-\delta_0$ on $\Gamma_{r-2\delta_0}$. Similarly, $u^\nu> r+\delta_0$ on $\Gamma_{r+2\delta_0}$. 
Thus, $\Gamma_{r+t}^\nu$ is a $C^2$ Jordan curve for every $t\in[-\delta_0,\delta_0],$ and $\Gamma_{r+t_1}^\nu\subset \text{Int}(\Gamma_{r+t_2}^\nu)$ whenever $-\delta_0\le t_1<t_2\le \delta_0$.
In conclusion, $A^\nu_{-\delta_0,\delta_0}$ is foliated by the level sets of $u^\nu$.

(\Rn{3})
Fix any positive number $\delta$ with $\delta<\min\{\delta_0,\frac12(t_2-t_1)\}$. Then for $\nu$ sufficiently small, it must hold that
\begin{align*}
A_{t_1+\delta,t_2-\delta} \subset A^\nu_{t_1,t_2}\subset A_{t_1-\delta,t_2+\delta}.
\end{align*}
Consequently, we have
\begin{align*}
A_{t_1,t_2}\triangle A^\nu_{t_1,t_2}\subset
A_{t_1-\delta,t_2+\delta}\setminus A_{t_1+\delta,t_2-\delta}.
\end{align*}
By the dominated convergence theorem, it is easy to see that
\begin{align*}
\lim_{\delta\rightarrow0}\mcH^2\left( A_{t_1-\delta,t_2+\delta}\setminus A_{t_1+\delta,t_2-\delta}\right)
=\lim_{\delta\rightarrow0}\iint_{A_{-2\delta_0,2\delta_0}} 1_{A_{t_1-\delta,t_2+\delta}\setminus A_{t_1+\delta,t_2-\delta}}\,dx=0.
\end{align*}
This completes the proof of the lemma.
\end{proof}


\medskip

\section{Rigidity via total curvature estimate}\label{sec_rigidity}

In this section, we establish Theorem \ref{main_thm}-\ref{thm_B} via a total curvature estimate for the flow.

The regularity requirement for $(\bv,P)$ in Theorem \ref{main_thm}-\ref{thm_B} can be relaxed.
In the sequel, let $\bv\in C^1(\mbT\times[-1,1])$, $P\in C^1(\mbR\times[-1,1])$ be a solution to \eqref{steady_Euler} satisfying $v_2(\cdot,\pm1)=0$. Let $u$ be the stream function of $\bv$ given by
\[
u(x_1,x_2)=\int_{x_2}^{1}v_1(x_1,t)\,dt.
\]
So $u\in C^2(\mbT\times[-1,1])$. Obviously, $u(\cdot,1)=0$ on $\mbT$. By the incompressibility condition and the slip boundary condition \eqref{slip boundary condition}, we infer that $u(\cdot,-1)= c$ on $\mbT$ for some constant $c$. 
Again, let $\msR(u)$ and $\msC(u)$ denote the sets of regular values and critical values of $u$, respectively. Then we can decompose the strip into the disjoint union
\begin{align}\label{decomposition_strip}
\mbT\times(-1,1)=u^{-1}(\msR(u))\cup \{|\nabla u|=0 \} \cup (u^{-1}(\msC(u))\cap \{|\nabla u|>0 \}),
\end{align}
where all the subsets on the right side are defined as subsets of $\mbT\times(-1,1)$.

For every $r\in\msR(u)$, the level set $u^{-1}(r)$ is a closed subset of $\mbT\times(-1,1)$. Here and below, we adopt the convention that, when $r=0$ or $r=c$, $u^{-1}(r)$ denotes only the portion of the level set lying inside $\mbT\times(-1,1)$. By the implicit function theorem, $u^{-1}(r)$ is a $C^2$ closed (compact and without boundary) one-dimensional manifold in $\mbT\times(-1,1)$, where $\mbT\times(-1,1)$ is regarded as the lateral surface of the cylinder $B_1\times(-1,1)$. Hence, by the classification of closed one-dimensional manifolds, $u^{-1}(r)$ consists of finitely many $C^2$ simple {\it closed} curves. Each such curve is either a contractible or a non-contractible closed streamline in $\mbT\times(-1,1)$.
By the Jordan curve theorem, every non-contractible closed streamline divides $\mbT\times(-1,1)$ into the upper component and the lower component. Therefore, we can define the position of a point or subset as being "above" or "below" a non-contractible closed streamline; or as being "between" two distinct non-contractible closed streamlines. 

For the sake of clarity, we provide the following definitions

\begin{definition}\label{defn_regular_tube}
An open subset $S$ of $\mbT\times(-1,1)$ is called a regular tube if it is bounded by two distinct non-contractible closed streamlines and foliated by non-contractible closed streamlines.
\end{definition}

\begin{definition}\label{defn_max_tube}
$M$ is called a maximal tube if there exists a sequence of regular tubes $\{S_k \}_{k\in\mbN_+}$ satisfying

\begin{enumerate}[label=\textup{(\roman*)}]
\item $M=\bigcup_{k\in \mbN_+} S_k$;

\item $S_k\Subset S_{k+1}$ for every $k\in \mbN_+$;

\item  If there exists a sequence of regular tubes $\{\Tilde{S}_k \}_{k\in\mbN_+}$ such that $\Tilde{S}_k\Subset \Tilde{S}_{k+1}$, $k\in \mbN_+$, and $M\subset \Tilde{M}\coloneqq\bigcup_{k\in \mbN_+} \Tilde{S}_k$, then $M=\Tilde{M}$.
\end{enumerate}

We call $\{S_k \}_{k\in\mbN_+}$ a sequence of approximating tubes for $M$.
\end{definition}

\begin{lemma}\label{lemma_maxtube}
Under Definitions \ref{defn_regular_tube} and \ref{defn_max_tube}, the following statements hold.
\begin{enumerate}[label=\textup{(\roman*)}]
\item Any two maximal tubes are either identical to each other or disjoint.

\item  For every non-contractible closed streamline $\Gamma$, there exists a unique maximal tube $M$ containing $\Gamma$. 

\item  There are at most countably many maximal tubes $\{M_j\}$.

\item  Let $\{S_k \}_{k\in\mbN_+}$ be a sequence of approximating tubes for $M$. Then it holds that
\begin{align*}
\lim_{k\rightarrow +\infty}\mcH^2(M\setminus S_k)=0.    
\end{align*}
\end{enumerate}
\end{lemma}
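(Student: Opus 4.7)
The four parts all hinge on a single topological observation, which I would isolate and prove first: if $S$ and $S'$ are regular tubes with $S \cap S' \neq \emptyset$, then $S \cup S'$ is itself a regular tube (the \emph{merging property}). This is because any two non-contractible closed streamlines in the strip are either disjoint or coincide (each is a closed Jordan component of a level set of $u$ with $|\nabla u|>0$ on it, and by the Jordan curve theorem it separates $\mbT \times (-1,1)$ into an upper and a lower piece), so the family of non-contractible closed streamlines is totally ordered by the above/below relation. Overlapping foliations must therefore agree streamline-by-streamline on the intersection and glue to a single foliation of $S \cup S'$, bounded below by the lower of the two lower boundaries and above by the upper of the two upper boundaries.

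Parts \textup{(i)}, \textup{(iii)}, and \textup{(iv)} are then quick corollaries. For \textup{(i)}, if two maximal tubes $M, M'$ intersect, then for large $k$ both $S_k$ and $S_k'$ contain a common point $z$, and by the merging property $\widetilde{S}_k \vcentcolon= S_k \cup S_k'$ is a nested sequence of regular tubes exhausting $M \cup M'$; applying clause \textup{(iii)} of the definition of maximal tube to both $M$ and $M'$ forces $M = M \cup M' = M'$. For \textup{(iii)}, maximal tubes are pairwise disjoint non-empty open subsets of the second-countable space $\mbT \times (-1,1)$, hence at most countably many. For \textup{(iv)}, $\{M \setminus S_k\}_k$ is a decreasing sequence of Borel sets with empty intersection, and $\mcH^2(M) \le \mcH^2(\mbT\times(-1,1)) < \infty$, so continuity of the finite measure from above yields $\mcH^2(M \setminus S_k) \to 0$.

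The substantive step is \textup{(ii)}. Given a non-contractible closed streamline $\Gamma$, I would first note that $\bv \ne 0$ on $\Gamma$ (otherwise $\Gamma$ degenerates to a point), so $|\nabla u|>0$ and $u \equiv r_0$ on $\Gamma$ for some $r_0 \in \msR(u)$. The implicit function theorem along the compact curve $\Gamma$ supplies a tubular neighborhood foliated by nearby level sets of $u$, each containing a single non-contractible closed Jordan curve near $\Gamma$; this yields an initial regular tube $S_0 \supset \Gamma$. Define $M$ to be the union of all regular tubes containing $\Gamma$ (equivalently, by the merging property, all regular tubes meeting $S_0$). By second countability, $M = \bigcup_k T_k$ for some sequence of regular tubes $T_k$, and the finite unions $T_1 \cup \cdots \cup T_k$ are regular tubes by iterated merging; a slight inward shrinking of their upper and lower boundary streamlines --- possible because each boundary streamline of a regular tube contained in $M$ lies in the interior of $M$ and is approximated by a continuum of nearby non-contractible streamlines --- produces a subsequence with $S_k \subset\subset S_{k+1}$ and $\bigcup_k S_k = M$. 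Clause \textup{(iii)} of maximality for $M$ is then automatic: any regular tube strictly enlarging $M$ would itself belong to the defining family and hence be contained in $M$, a contradiction. Uniqueness follows from part \textup{(i)}. I expect the main technical obstacle to be precisely this nesting step in \textup{(ii)}: arranging $S_k \subset\subset S_{k+1}$ with each $S_k$ a genuine regular tube requires careful choice of the bounding non-contractible streamlines inside $M$ via the implicit function theorem applied near the previous boundary streamlines. Everything else is formal, resting on the merging property, second countability, and elementary measure theory.
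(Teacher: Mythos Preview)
Your proposal is correct and follows the same overall strategy as the paper: both isolate the merging property as the key lemma, both prove (i) by forming $S_k \cup S_k'$, both define $M$ in (ii) as the union of all regular tubes containing $\Gamma$, and (iv) is monotone convergence in either version. The implementations diverge in (ii) and (iii). For (iii), the paper counts maximal tubes by area (only finitely many have area $>1/k$), whereas you invoke disjointness of nonempty open sets in a second-countable space; both are fine. For (ii), the paper builds the approximating sequence by a supremum-of-area argument: given $S_k$, choose $\hat S\in\Lambda$ with $S_k\subset\subset\hat S$ and $\tilde S\in\Lambda$ with $\mcH^2(\tilde S)>\alpha-1/k$ where $\alpha=\sup_{S\in\Lambda}\mcH^2(S)$, and set $S_{k+1}=\hat S\cup\tilde S$; exhaustion of $M$ is then forced by an area contradiction. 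This sidesteps the compact-containment bookkeeping you flag as the main obstacle. Your Lindel\"of route works equally well once you note that for any $U\in\Lambda$ one has $\overline U\subset M$ (each boundary streamline of $U$ has a regular tube neighborhood that merges with $U$ into a member of $\Lambda$), so the finite unions $U_k=T_1\cup\cdots\cup T_k$ satisfy $\overline{U_k}\subset M=\bigcup_j U_j$ and a diagonal extraction yields the nested $S_k$; your phrase ``inward shrinking'' should really be ``outward enlargement'' here. Two minor inaccuracies: the value $r_0=u|_\Gamma$ need not lie in $\msR(u)$ (critical points may occur elsewhere on $u^{-1}(r_0)$), but only $|\nabla u|>0$ on $\Gamma$ is needed for the tubular neighborhood; and ``otherwise $\Gamma$ degenerates to a point'' is not the right justification --- $|\nabla u|>0$ on a closed streamline is essentially definitional in this setting.
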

\begin{proof}
(\Rn{1}) Let $M_1=\bigcup_{k\in \mbN_+} S_{1,k}$ and $M_2=\bigcup_{k\in \mbN_+} S_{2,k}$ be two maximal tubes. Assume there exists $x\in M_1\cap M_2$. Then there exist regular tubes $S_{1,k_0}\subset M_1$ and $S_{2,k_0}\subset M_2$ with $x\in S_{1,k_0}\cap S_{2,k_0}$. Now for every $k\ge k_0$, $\Tilde{S}_k\vcentcolon=S_{1,k}\cup S_{2,k}$ is also a regular tube, and $\Tilde{S}_k\Subset \Tilde{S}_{k+1}$. Hence $\Tilde{M}\vcentcolon=\bigcup_{k\ge k_0}\Tilde{S}_k$ contains $M_1$ and $M_2$. By the maximality of $M_1$ and $M_2$, it must hold that $M_1=M_2=\Tilde{M}$.

(\Rn{2}) The uniqueness is an immediate consequence of part (\Rn{1}). Let $\Gamma$ be a non-contractible closed streamline. By Lemma \ref{prelim_lemma}-(\Rn{1}), there exists a regular tube $S_1$ containing $\Gamma$ (one can always use the same argument to find a larger regular tube containing $S_1$). So the collection of all regular tubes containing $\Gamma$, denoted by $\Lambda$, is nonempty. We claim that
\begin{align*}
M\vcentcolon=\bigcup_{S\in \Lambda} S
\end{align*}
is a maximal tube. First, the maximality of $M$ follows immediately from its definition. Next, we construct approximating tubes inductively.

The first regular tube $S_1$ has been constructed above. Assume that for some $k\in\mbN_+$, there are regular tubes
\[
\{S_1,S_2,\cdots,S_k\}\subset\Lambda \ \ {\rm satisfying}\  \ S_1\Subset S_2\cdots\Subset  S_k. 
\]
There exists a regular tube $\hat{S}\in \Lambda$ such that $S_k\Subset\hat{S}$. Let $\alpha=\sup_{S\in\Lambda}\mcH^2(S)$. Then there exists another regular tube $\Tilde{S}\in\Lambda$ such that 
\[
\alpha-\frac1k<\mcH^2(\Tilde{S})<\alpha.
\]
Now $S_{k+1}\vcentcolon=\hat{S}\cup \Tilde{S}$ is a regular tube belonging to $\Lambda$. Clearly, it holds that $S_k\Subset S_{k+1}$ and $\lim_{k\rightarrow+\infty}\mcH^2(S_k)=\alpha$.

We claim that $M=\bigcup_{k\in\mbN_+}S_k$. We only need to show $M\subset\bigcup_{k\in\mbN_+}S_k.$ If not, there exists $x\in M$ such that $x\notin S_k$ for any $k\in\mbN_+$. By the definition of $M$, there exists $S_x\in\Lambda$ containing $x$, and hence containing the streamline $\gamma(x)$ passing through $x$. Without loss of generality, we may assume that $\gamma(x)$ lies above $\Gamma$, whence it lies above every $S_k$. But then $S_x\cup S_k\in \Lambda$ satisfies that $\mcH^2(S_x\cup S_k)-\mcH^2(S_k)$ has a uniform positive lower bound, since $(S_x\cup S_k)\setminus S_k$ contains the regular tube bounded by $\gamma(x)$ and the upper boundary of $S_x$. This contradicts the fact that $\lim_{k\rightarrow+\infty}\mcH^2(S_k)=\alpha$, completing the proof of the claim.

(\Rn{3}) For every $k\in \mbN_+$, there are only finitely many maximal tubes with area greater than $\frac1k.$ We can sort the maximal tubes in descending order of area. Clearly, this procedure ensures that all maximal tubes are exhausted.

(\Rn{4}) Using the dominated convergence theorem yields
\begin{align*}
\lim_{k\rightarrow\infty}\mcH^2(M\setminus S_k)
=\iint_{M} 1\,dx-\lim_{k\rightarrow\infty}\iint_{M}1_{S_k}(x)\,dx=0.
\end{align*}
Hence the lemma is proved.
\end{proof}

\begin{lemma}\label{lemma_union_maxtubes}
If $\bv$ has no contractible closed streamlines, then there are at most countably many pairwise disjoint maximal tubes $\{M_j\}$ such that $u^{-1}(\msR(u))\subset \bigcup_{j}M_j$. Moreover,
\begin{align*}
\mcH^2\left(\bigcup_{j}M_j\setminus u^{-1}(\msR(u)) \right)=0.
\end{align*}
\end{lemma}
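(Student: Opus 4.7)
The plan is to combine Lemma \ref{lemma_maxtube} with the observation that, under the no-contractible-closed-streamline hypothesis, every component of every regular level set of $u$ is automatically a non-contractible closed streamline. For the inclusion $u^{-1}(\msR(u)) \subset \bigcup_j M_j$, I would fix $x \in u^{-1}(\msR(u))$ and look at the connected component $\Gamma_x$ of $u^{-1}(u(x))$ through $x$. Since $u(x)$ is a regular value, $\Gamma_x$ is a $C^2$ Jordan curve in $\mbT\times(-1,1)$ along which $|\nabla u|>0$; and because $\bv = \nabla^\perp u$ is tangent to the level sets of $u$, $\Gamma_x$ is a closed streamline (as recalled in the paragraph preceding Definition \ref{defn_regular_tube}). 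By hypothesis $\Gamma_x$ cannot be contractible, so it is a non-contractible closed streamline, and Lemma \ref{lemma_maxtube}-(ii) supplies a (unique) maximal tube $M \ni x$. Letting $\{M_j\}$ enumerate all maximal tubes of $\bv$, the countability and pairwise disjointness of this collection are given verbatim by parts (iii) and (i) of Lemma \ref{lemma_maxtube}.

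For the measure-zero statement, I would argue that each $M_j$ is contained in $\{|\nabla u|>0\}$. Indeed, a regular tube $S$ is foliated by non-contractible closed streamlines, and any such streamline is a nontrivial integral curve of the locally Lipschitz field $\bv$; by uniqueness for the ODE $\dot x = \bv(x)$ it cannot reach a stagnation point, so $|\nabla u| = |\bv| > 0$ on every leaf, hence on all of $S$. Taking the union over an approximating sequence of regular tubes as in the definition of maximal tube gives $M_j \subset \{|\nabla u|>0\}$. Then the disjoint decomposition \eqref{decomposition_strip} yields
\begin{align*}
\bigcup_j M_j \setminus u^{-1}(\msR(u))
\;\subset\; \{|\nabla u|>0\} \setminus u^{-1}(\msR(u))
\;=\; u^{-1}(\msC(u)) \cap \{|\nabla u|>0\},
\end{align*}
which is $\mcH^2$-null by Lemma \ref{lemma_null}. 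This closes the proof.

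I do not expect any serious obstacle here: the lemma is essentially a bookkeeping consequence of the structural results already in place. The only substantive point is the identification in the first step of a component of a regular level set with an honest (nontrivial) closed streamline, which is forced by the tangency $\bv\cdot\nabla u \equiv 0$ together with $|\nabla u|>0$ on such a component; this is precisely what lets the no-contractible-closed-streamline hypothesis rule out the contractible alternative and trigger Lemma \ref{lemma_maxtube}-(ii).
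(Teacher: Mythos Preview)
Your proposal is correct and follows essentially the same route as the paper's proof: both use Lemma \ref{lemma_maxtube} to place every point of $u^{-1}(\msR(u))$ inside a maximal tube (via the non-contractibility hypothesis), then observe that maximal tubes avoid critical points so that the residual set lands in $u^{-1}(\msC(u))\cap\{|\nabla u|>0\}$, which is null by Lemma \ref{lemma_null}. Your ODE-uniqueness justification for $M_j\subset\{|\nabla u|>0\}$ is slightly more explicit than the paper's one-line assertion, but the argument is otherwise the same.
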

\begin{proof}
By Lemma \ref{lemma_maxtube}, the maximal tubes $\{M_j\}$ are at most countably many and pairwise disjoint.
For every $x\in u^{-1}(\msR(u)),$ by the assumption, the streamline $\gamma(x)$ passing through $x$ is non-contractible. Again, by Lemma \ref{lemma_maxtube}, there exists a maximal tube containing $\gamma(x)$. So $u^{-1}(\msR(u))\subset \bigcup_{j}M_j$. Since a maximal tube does not contain any critical point of $u$, we get from \eqref{decomposition_strip} that
\begin{align*}
\bigcup_{j}M_j\setminus u^{-1}(\msR(u))\subset u^{-1}(\msC(u))\cap \{|\nabla u|>0 \}.
\end{align*}
It follows from Lemma \ref{lemma_null} that the set on the right is $\mcH^2$-null. Hence the lemma follows.
\end{proof}

We are now ready to prove Theorem \ref{main_thm}-\ref{thm_B}.

\begin{proof}[Proof of Theorem \ref{main_thm}-\ref{thm_B}]
Assume that $\bv$ has no contractible closed streamlines. We will show that this assumption forces $\bv$ to be a shear flow. The decisive step is to prove the vanishing of the total curvature, namely
\begin{align}\label{trivial_tc}
\iint_{\mbT\times(-1,1)}(|\nabla\bv|^2-|\nabla|\bv||^2)\,dx=0.
\end{align}

First, by Lemma \ref{lemma_null}, $u^{-1}(\msC(u))\cap \{|\nabla u|>0 \}$ is an $\mcH^2$-null set. On the other hand, we have $|\nabla\bv|^2-|\nabla|\bv||^2=0$ $\mcH^2$-a.e. in $\{|\nabla u|=0\}$. Therefore, it follows from \eqref{decomposition_strip} and Lemma \ref{lemma_union_maxtubes} that
\begin{align}\label{tc_identity}
\iint_{\mbT\times(-1,1)}(|\nabla\bv|^2-|\nabla|\bv||^2)\,dx
=&\iint_{u^{-1}(\msR(u))}(|\nabla\bv|^2-|\nabla|\bv||^2)\,dx\nonumber\\
=&\sum_{j}\iint_{M_j}(|\nabla\bv|^2-|\nabla|\bv||^2)\,dx,
\end{align}
where $\{ M_j \}$ is the collection of maximal tubes. 

Let $M$ be any maximal tube, and $\{S_k\}_{k\in\mbN_+}$ be a sequence of approximating tubes for $M$. Since $|\bv|>0$ in $\overline{S_k}$ and $\overline{S_k}$ are foliated by non-contractible closed streamlines, there exists a function $\theta\in C^1(\overline{S_k})$  with $\bv/|\bv|=(\cos\theta,\sin\theta)$. In fact, $\theta(x)$ is exactly the flow angle at $x$. Here, $\theta\in C^1(\overline{S_k})$ is, by default, a periodic function in $x_1$; that is, $\theta(x)=\theta(x_1+2\pi,x_2)$ for all $x\in S_k$.
One then readily computes that
\begin{align}\label{grad_theta}
\nabla\theta=\frac{v_1\nabla v_2-v_2\nabla v_1}{|\bv|^2}\quad {\rm in}\quad S_k.
\end{align}
This together with \eqref{Euler_divergence_form} implies that
\begin{align*}
\Div(|\bv|^2\nabla\theta)=0 \quad {\rm in}\quad \mathcal{D}'(S_k),
\end{align*}
where $\mathcal{D}'(S_k)$ denotes the space of distributions in $S_k$.
Testing this equation by $\theta$ and noticing that
\begin{align*}
|\bv|^2|\nabla\theta|^2=|\nabla\bv|^2-|\nabla|\bv||^2 \quad {\rm in}\ S_k,
\end{align*}
we obtain
\begin{align}\label{elliptic_estimate}
\iint_{S_k}(|\nabla\bv|^2-|\nabla|\bv||^2)\,dx=\int_{\partial S_k}\theta |\bv|^2\partial_{\bn}\theta\,ds,
\end{align}
where $\partial_{\bn}$ denotes the outward normal derivative, and $s$ denotes the arc length parameter for the streamlines.

It follows directly from the Euler system \eqref{steady_Euler} that
\begin{align*}
v_1\nabla v_2-v_2\nabla v_1=\nabla^\perp P.
\end{align*}
Combining this with \eqref{grad_theta} yields
\begin{align}\label{normal_dtheta}
|\bv|^2\partial_{\bn}\theta=\bn\cdot\nabla^\perp P=-\bn^\perp\cdot\nabla P=-\partial_sP \quad {\rm on}\ \partial S_k,
\end{align}
where $\partial_s=\bn^\perp\cdot\nabla$ has been used.
Similarly,
\begin{align}\label{tangent_dtheta}
|\bv|^2\partial_{s}\theta=\partial_{\bn}P \quad {\rm on}\ \partial S_k .
\end{align}
Since $\partial S_k$ are streamlines, taking the inner product between $\bv$ and the momentum equation of \eqref{steady_Euler} gives the Bernoulli's law,
\begin{align}\label{Bernoulli}
\partial_sP=-\frac12\partial_s(|\bv|^2) \quad {\rm on}\ \partial S_k.
\end{align}

Next, the identity \eqref{elliptic_estimate}, together with \eqref{normal_dtheta}--\eqref{Bernoulli}, yields
\begin{align}\label{integral_on_regular_tube}
\iint_{S_k}(|\nabla\bv|^2-|\nabla|\bv||^2)\,dx
=&-\int_{\partial S_k}\theta\partial_s P\,ds
=\frac12\int_{\partial S_k}\theta\partial_s(|\bv|^2)\,ds\nonumber\\
=&-\frac12\int_{\partial S_k}|\bv|^2\partial_s\theta\,ds
=-\frac12\int_{\partial S_k}\partial_{\bn}P\,ds\nonumber\\
=& -\frac12\iint_{S_k}\Delta P\,dx.
\end{align}
We need to justify the regularity of $P$ in the last equality. In fact, it follows from the Euler system \eqref{steady_Euler} that
\begin{align}\label{Poisson_equation_P}
-\Delta P=\partial_{x_1}\bv\cdot\nabla v_1+\partial_{x_2}\bv\cdot\nabla v_2.
\end{align}
So $\Delta P\in C(\mbT\times[-1,1])$, and elliptic regularity implies $P\in W^{2,q}_{\textup{loc}}(\mbT\times(-1,1))$ for any $q\in[1,\infty)$.
Now sending $k$ to infinity in \eqref{integral_on_regular_tube} and recalling Lemma \ref{lemma_maxtube}, we arrive at
\begin{align*}
\iint_{M}(|\nabla \bv|^2-|\nabla|\bv||^2)\,dx
=-\frac12\iint_{M}\Delta P\,dx.
\end{align*}
Since this identity holds for every maximal tube $M$, we get from \eqref{tc_identity} and Lemma \ref{lemma_union_maxtubes} that
\begin{align}\label{tc_equals_P}
\iint_{\mbT\times(-1,1)}(|\nabla \bv|^2-|\nabla|\bv||^2)\,dx
=-\frac12\iint_{u^{-1}(\msR(u))}\Delta P\,dx.
\end{align}
Moreover, \eqref{Poisson_equation_P} implies that $\Delta P=0$ $\mcH^2$-a.e. in $\{|\nabla u|=0\}$. So by \eqref{decomposition_strip}, Lemma \ref{lemma_null} and \eqref{tc_equals_P}, we have
\begin{align*}
\iint_{\mbT\times(-1,1)}(|\nabla \bv|^2-|\nabla|\bv||^2)\,dx
=&-\frac12\iint_{\mbT\times(-1,1)}\Delta P\,dx\\
=&-\frac12\int_{\mbT}\partial_{x_2}P(x_1,1)\,dx_1+\frac12\int_{\mbT}\partial_{x_2}P(x_1,-1)\,dx_1.
\end{align*}
Furthermore, it follows from the Euler system and the slip boundary condition that
\[
\partial_{x_2}P(x_1,\pm 1)=-\bv\cdot\nabla v_2(x_1,\pm1)=0,\quad x_1\in\mbT.
\]
Consequently, the identity \eqref{trivial_tc} must hold.

Lastly, \eqref{trivial_tc} implies that the flow angle $\theta$ is constant in every maximal tube. This, together with the slip boundary condition,  forces $\bv$ to be a parallel shear flow.

Hence, the proof of Theorem \ref{main_thm}-\ref{thm_B} is completed.
\end{proof}


\medskip

\section{Selection principle in a bounded connected domain}\label{sec_selection_bdd}

This section is devoted to the proof of Theorem \ref{main_thm}-\ref{thm_A}. In fact, we can prove a slightly stronger result by considering the following forced Navier-Stokes system,
\begin{eqnarray}\label{NS_forced}
\left\{\begin{aligned}
&-\nu\Delta\bv^\nu+\bv^\nu\cdot\nabla\bv^\nu+\nabla P^\nu=\bbf^\nu,\\
&\Div\,\bv^\nu=0,
\end{aligned}\right.
\end{eqnarray}
where the external force satisfies $\bbf^\nu=o(\nu)$ as $\nu\rightarrow0^+$.

Clearly, Theorem \ref{main_thm}-\ref{thm_A} follows from the following proposition.
\begin{pro}\label{prop_A}
Suppose $\bv\in C^{2}(\Bar{\Omega})$ and $P\in C^{2}(\Bar{\Omega})$ solve \eqref{steady_Euler}-\eqref{slip boundary condition}, where $\Omega$ is of the form \eqref{defn_Omega}. 
Let $\bv^\nu\in C^2(\Omega)\cap C(\Bar{\Omega})$ and $P^\nu\in C^1(\Omega)$ be a sequence of solutions to \eqref{NS_forced}, where $\bbf^\nu\in C(\Omega)$ satisfies 
\begin{align}\label{smaller_force1}
\lim_{\nu\rightarrow 0^+} \frac{1}{\nu}\int_{\Omega'}|\bbf^\nu|\,dx=0,\quad {\rm for\ all}\ \Omega'\Subset\Omega.
\end{align}
If $\Omega$ is multiply connected, we further assume the zero-flux boundary condition
\begin{align}\label{zero_flux_bdd2}
\int_{\bGm_j}\bn\cdot\bv^\nu \,ds=0, \quad j=0,1,\cdots,J,
\end{align}
where $\bn$ is the outward unit normal to $\partial\Omega$.
If $\bv^\nu$ converges to $\bv$ in $L^\infty_{\textup{loc}}\cap H^1_{\textup{loc}}(\Omega)$ as $\nu$ tends to zero, then the vorticity $\omega=\partial_{x_1}v_2-\partial_{x_2}v_1$ must be constant.    
\end{pro}
\begin{proof}
Since $\bv$ is divergence-free and satisfies the slip boundary condition \eqref{slip boundary condition}, it admits a stream function $u\in C^3(\Bar{\Omega})$ such that $u=\bar{u}_j$ on $\bGm_j$ for $j=0,1,\cdots,J$, where $\bar{u}_0,\bar{u}_1,\cdots,\bar{u}_J$ are constants.
As before, $\msR(u)$ and $\msC(u)$ denote the set of regular values and the set of critical values of $u$, respectively.

{\it Step 1.} First, $\Omega$ can be decomposed into the disjoint union
\begin{align*}
\Omega=u^{-1}(\msR(u))\cup \{|\nabla u|=0 \} \cup (u^{-1}(\msC(u))\cap \{|\nabla u|>0 \}).
\end{align*}
We will keep the convention that the subsets on the right are relative to the open domain $\Omega$. 
It follows from Lemma \ref{lemma_null} that $u^{-1}(\msC(u))\cap \{|\nabla u|>0 \}$ is an $\mcH^2$-null subset of $\Omega$.
For $u\in C^3(\Bar{\Omega})$, it holds that
\begin{align*}
\nabla\omega=\Delta \nabla u=0,\quad \mcH^2\textup{-a.e. in } \ \{|\nabla u|=0 \}.
\end{align*}
Consequently,
\begin{align}\label{gra_omega1}
\nabla\omega=0,\quad \mcH^2\text{-a.e. in }\ \Omega\setminus u^{-1}(\msR(u)).
\end{align}
Therefore, it remains to show that
\begin{align}\label{gra_omega2}
\nabla\omega=0,\quad \text{in} \ u^{-1}(\msR(u)).
\end{align}

{\it Step 2.} 
For every $r\in\msR(u)$, $u^{-1}(r)$ is a $C^3$ closed one-dimensional manifold in $\Omega$ (we adopt the convention that, if $r\in\{\bar{u}_{0},\bar{u}_{1},\cdots,\bar{u}_{J} \},$ $u^{-1}(r)$ does not contain the boundary component).
Then by the classification of closed one-dimensional manifolds, $u^{-1}(r)$ consists of finitely many $C^3$ Jordan curves in $\Omega.$ Let $\Gamma_r$ be any component of $u^{-1}(r)$. 
Let us denote by $\bn$ the outward unit normal to $\Gamma_r$, and assume that $\bn\cdot\nabla u=|\nabla u|$ (where $|\nabla u|>0$). The opposite case, $\bn\cdot\nabla u=-|\nabla u|$, can be handled similarly.

Now let $u^\nu$ be a stream function of $\bv^\nu$, whose existence is guaranteed by the incompressibility of $\bv^\nu$ and the boundary condition \eqref{zero_flux_bdd2}. Fix $z\in \Omega$ and specify $u^\nu(z)=u(z).$ Then the assumption of Proposition \ref{prop_A} and Lemma \ref{prelim_u_convergence} imply that 
\begin{align}\label{uep_converges_u}
u^\nu \rightarrow u\quad  {\rm in} \quad C_{\text{loc}}^1(\Omega)\cap H^2_{\text{loc}}(\Omega) \quad {\rm as} \quad \nu\rightarrow 0^+.
\end{align}

Recall Lemma \ref{prelim_lemma} and the notations introduced therein. For $t\in [-\delta_0,\delta_0]$, define 
\begin{align*}
X(t)=\int_{\Gamma_{r+t}}\frac{\nabla u}{|\nabla u|}\cdot\nabla\omega\,ds,\quad 
Y(t)=\int_{-\delta_0}^{t}X(t')\,dt', \quad\text{and}\quad 
Z(t)=\int_{-\delta_0}^{t}Y(t')\,dt'.
\end{align*}
Clearly, $X$ is continuous with respect to $t$,
so $Z''(t)=X(t)$ for every $t\in [-\delta_0,\delta_0]$.
By the coarea formula and integration by parts, one has
\begin{align*}
Y(t)=& \int_{-\delta_0}^{t} \int_{\Gamma_{r+t'}}\frac{\nabla u}{|\nabla u|}\cdot\nabla\omega\,ds \,dt'\\
=&\iint_{A_{-\delta_0,t}}\nabla u\cdot\nabla\omega\,dx\\
=&-\iint_{A_{-\delta_0,t}}\omega^2\,dx
+\int_{\Gamma_{r+t}}\omega|\nabla u|\,ds
-\int_{\Gamma_{r-\delta_0}}\omega|\nabla u|\,ds.
\end{align*}
Applying the coarea formula again yields
\begin{align*}
Z(t)=&-\int_{-\delta_0}^{t} \iint_{A_{-\delta_0,t'}}\omega^2\,dx\, dt'
+\iint_{A_{-\delta_0,t}}\omega|\nabla u|^2\,dx
-(t+\delta_0)\int_{\Gamma_{r-\delta_0}}\omega|\nabla u|\,ds.
\end{align*}
Furthermore, it holds that
\begin{align*}
\int_{-\delta_0}^{t} \iint_{A_{-\delta_0,t'}}\omega^2\,dx\, dt'
=&\int_{-\delta_0}^{t} \int_{-\delta_0}^{t'}\int_{\Gamma_{r+t''}}\frac{\omega^2}{|\nabla u|} \,ds \, dt'' \, dt'\\
=&\int_{-\delta_0}^{t} \int_{t''}^{t} \int_{\Gamma_{r+t''}}\frac{\omega^2}{|\nabla u|} \,ds \, dt' \, dt''\\
=&\int_{-\delta_0}^{t} \int_{\Gamma_{r+t''}} (r+t-u) \frac{\omega^2}{|\nabla u|} \,ds \, dt''\\
=&\int_{A_{-\delta_0,t}}(r+t-u)\omega^2\,dx.
\end{align*}
Therefore, for every $t\in [-\delta_0,\delta_0]$, we arrive at 
\begin{align*}
&Z(t)+(t+\delta_0) \int_{\Gamma_{r-\delta_0}}\omega|\nabla u|\,ds
=\iint_{A_{-\delta_0,t}}\left( \omega|\nabla u|^2-(r+t-u)\omega^2 \right)\,dx.
\end{align*}

Similarly, for $t\in [-\delta_0,\delta_0]$, define
\begin{align*}
X^\nu(t)=\int_{\Gamma_{r+t}^\nu}\frac{\nabla u^\nu}{|\nabla u^\nu|}\cdot\nabla\omega^\nu \,ds
\quad\text{and}\quad 
Z^\nu(t)=\int_{-\delta_0}^{t}\int_{-\delta_0}^{t'}X^\nu(t'') \,d t''\,d t'.   
\end{align*}
The same computations yield that for $t\in [-\delta_0,\delta_0]$,
\begin{align*}
&Z^\nu(t)+(t+\delta_0) \int_{\Gamma_{r-\delta_0}^\nu}\omega^\nu|\nabla u^\nu|\,ds
=\int_{A^\nu_{-\delta_0,t}}\left( \omega^\nu|\nabla u^\nu|^2-(r+t-u^\nu)(\omega^\nu)^2 \right)\,dx.
\end{align*}

For every $t\in [-\delta_0,\delta_0]$, as $\nu\rightarrow0^+,$ it follows from \eqref{uep_converges_u} that
\[
Q^\nu\vcentcolon=\omega^\nu|\nabla u^\nu|^2-(r+t-u^\nu)(\omega^\nu)^2\to Q\vcentcolon=\omega|\nabla u|^2-(r+t-u)\omega^2 \quad \text{in} \ L^1(A_{-2\delta_0,2\delta_0}).
\]
This, together with Lemma \ref{prelim_lemma}-(\Rn{3}) and
\[
\left| \int_{A^\nu_{-\delta_0,t}}Q^\nu\,dx-\int_{A_{-\delta_0,t}}Q \,dx\right|
\le \int_{A_{-2\delta_0,2\delta_0}}|Q^\nu-Q| \,dx
+\int_{A_{-\delta_0,t}\triangle A^\nu_{-\delta_0,t}}|Q|\,dx,
\]
yields
\begin{align*}
\lim_{\nu\rightarrow 0^+}
\int_{A^\nu_{-\delta_0,t}}Q^\nu\,dx
=\int_{A_{-\delta_0,t}}Q \,dx,\quad t\in [-\delta_0,\delta_0].
\end{align*}
It then follows that for $t\in [-\delta_0,\delta_0]$,
\begin{align}\label{limit_trick}
&\lim_{\nu\rightarrow 0^+}
\left( Z^\nu(t)+(t+\delta_0)\int_{\Gamma^{\nu}_{r-\delta_0}}\omega^\nu|\nabla u^\nu|\,ds \right)
=
Z(t)+(t+\delta_0)\int_{\Gamma_{r-\delta_0}}\omega|\nabla u|\,ds.
\end{align}
This formula will also be used in the next section for the proof of Theorem \ref{main_thm}-\ref{thm_C}.

{\it Step 3.}
Taking the dot product between $\frac{\bv^\nu}{|\bv^\nu|}$ and the momentum equation of \eqref{NS_forced} yields
\begin{align}\label{total_head_pressure}
\frac{\bv^\nu}{|\bv^\nu|}\cdot\nabla \left(\frac12|\bv^\nu|^2+P^\nu\right)
=\nu \frac{\bv^\nu}{|\bv^\nu|}\cdot\nabla^\perp\omega^\nu+\bbf^\nu\cdot\frac{\bv^\nu}{|\bv^\nu|},   
\end{align}
where $\omega^\nu=\partial_{x_1}v_2^\nu-\partial_{x_2}v_1^\nu$ is the vorticity of $\bv^\nu$, and the identity $\Delta\bv^\nu=\nabla^\perp\omega^\nu$ has been used. Next, notice that the integral on the left-hand side of \eqref{total_head_pressure} along $\Gamma_{r+t}^\nu$ ($t\in [-\delta_0,\delta_0]$) vanishes.
Consequently, for $t\in [-\delta_0,\delta_0]$, it holds that
\begin{align*}
X^\nu(t)=\int_{\Gamma_{r+t}^\nu}\frac{\bv^\nu}{|\bv^\nu|}\cdot\nabla^\perp\omega^\nu \,ds
=
-\frac{1}{\nu}\int_{\Gamma_{r+t}^\nu} \bbf^\nu\cdot \frac{\bv^\nu}{|\bv^\nu|} \,ds.   
\end{align*}
By the definition of $Z^\nu(t)$, one has
\begin{align*}
Z^\nu(t)=&-\frac{1}{\nu}
\int_{-\delta_0}^{t}\int_{-\delta_0}^{t'}\int_{\Gamma_{r+t''}^\nu} \bbf^\nu\cdot \frac{\bv^\nu}{|\bv^\nu|} \,ds \,d t''\,d t'
= -\frac{1}{\nu}\iint_{A^\nu_{-\delta_0,t}}(r+t-u^\nu)\bbf^\nu\cdot\bv^\nu\,dx.
\end{align*}
This, together with assumption \eqref{smaller_force1}, implies that
\[
\lim_{\nu\rightarrow0^+} Z^\nu(t)=0,\quad t\in [-\delta_0,\delta_0].
\]
It then follows from \eqref{limit_trick} that the limit
\[
l_0\vcentcolon=\lim_{\nu\rightarrow 0^+}
\int_{\Gamma^{\nu}_{r-\delta_0}}\omega^\nu|\nabla u^\nu|\,ds
\]
exists, and that
\begin{align*}
Z(t)=l_0(t+\delta_0)-(t+\delta_0)\int_{\Gamma_{r-\delta_0}}\omega|\nabla u|\,ds,\quad t\in [-\delta_0,\delta_0].
\end{align*}
Hence one has $X(t)=Z''(t)=0$ for every $t\in[-\delta_0,\delta_0]$.
In particular, taking $t=0$ gives
\begin{align}\label{int_gra_omega}
\int_{\Gamma_r}\frac{\nabla u}{|\nabla u|}\cdot\nabla\omega\,ds=0.
\end{align}

Since $A_{-\delta_0,\delta_0}$ is foliated by streamlines, then there exists a $C^1$ function $f$ such that $\omega=f(u)$ in $A_{-\delta_0,\delta_0}$ (see, e.g., \cite{HN_CPAM_2017,HN_JEMS_2023}). This fact along with \eqref{int_gra_omega} gives that $f'(r)\int_{\Gamma_r}|\nabla u|\,ds=0.$ Recall that $|\nabla u|>0$ on $\Gamma_r$. Hence we have $f'(u)=f'(r)=0$ on $\Gamma_r$, which further implies $\nabla\omega=0$ on $\Gamma_r$. Given that both $r\in\msR(u)$ and $\Gamma_r\subset u^{-1}(r)$ are arbitrary, \eqref{gra_omega2} must hold.

Finally, it follows from \eqref{gra_omega1} and \eqref{gra_omega2} that $\nabla\omega=0$ $\mcH^2$-a.e. in $\Omega$, hence everywhere in $\Omega$ by the continuity of $\nabla\omega$. Thus $\omega$ is constant in $\Omega$, and the proof of Proposition \ref{prop_A} is completed.
\end{proof}

\begin{remark}
The requirement for second-order derivative regularity of $\bv$ is essential in the above argument. It would be very interesting to characterize vanishing viscosity limits for flows with lower regularity.
\end{remark}


\medskip

\section{A selection principle for flows in periodic strips}\label{sec_selection_strip}

In this section, we apply Theorem \ref{main_thm}-\ref{thm_A} and Theorem \ref{main_thm}-\ref{thm_B} to prove Theorem \ref{main_thm}-\ref{thm_C}.

In fact, Theorem \ref{main_thm}-\ref{thm_C} is a direct consequence of the following proposition.

\begin{pro}\label{prop_B}
Suppose that $\bv\in C^{2}(\mbT\times[-1,1])$ and $P\in C^{2}(\mbR\times[-1,1])$ solve \eqref{steady_Euler} supplemented with $v_2(\cdot,\pm1)=0$. 
Let $\bv^\nu\in C^{2}(\mbT\times(-1,1))\cap C(\mbT\times[-1,1])$ and $P^\nu\in C^1(\mbR\times(-1,1))$ be a sequence of solutions to \eqref{NS_forced}, where $\bbf^\nu\in C(\mbT\times(-1,1))$ satisfies 
\begin{align}\label{smaller_force2}
\lim_{\nu\rightarrow 0^+} \frac{1}{\nu}\int_{\mbT\times(-1+h,1-h)}|\bbf^\nu|\,dx=0,\quad {\rm for\ all}\ h\in(0,1).
\end{align}
Assume $\bv^\nu=(v_1^\nu,v_2^\nu)$ satisfies the zero-flux boundary condition
\begin{align}\label{zero_flux_strip2}
\int_{\mbT}v_2^\nu(x_1,-1)\,dx_1=0.  
\end{align}
If $\bv^\nu$ converges to $\bv$ in $L^\infty_{\textup{loc}}\cap H^1_{\textup{loc}}(\mbT\times(-1,1))$ as $\nu$ tends to zero, then there exist constants $c_0,c_1,$ and $c_2$ such that $\bv=(c_0+c_1 x_2+c_2 x_2^2,0)$.  
\end{pro}

We adopt the notation from Section \ref{sec_rigidity}. Let $u\in C^3(\mbT\times[-1,1])$ be the stream function of $\bv$ with $u(\cdot,1)=0$ and $u(\cdot,-1)= c$ on $\mbT$. 
Let $\msR(u)$ and $\msC(u)$ denote the set of regular values and the set of critical values of $u$, respectively. Remember that $\mbT\times(-1,1)$ can be decomposed into the disjoint union
\begin{align}\label{decomposition_strip_again}
\mbT\times(-1,1)=u^{-1}(\msR(u))\cup \{|\nabla u|=0 \} \cup (u^{-1}(\msC(u))\cap \{|\nabla u|>0 \}).
\end{align}
For every $r\in\msR(u)$, $u^{-1}(r)$ consists of finitely many $C^3$ contractible or non-contractible closed streamlines. 

The proof of Proposition \ref{prop_B} falls into three cases, according to whether $u^{-1}(\msR(u))$ contains only contractible, only non-contractible, or both types of closed streamlines. We shall clarify that $u^{-1}(\msR(u))$ contains a contractible (resp. non-contractible) closed streamline if and only if $\mbT\times(-1,1)$ does. To see this, suppose $\Gamma_r\subset u^{-1}(r)$ is a contractible (resp. non-contractible) closed streamline in $\mbT\times(-1,1)$. By Lemma \ref{prelim_lemma}-(\Rn{1}), we know that there exists a tubular neighborhood of $\Gamma_r$ that is foliated by contractible (resp. non-contractible) closed streamlines $\Gamma_s\subset u^{-1}(s)$, $r-\delta< s<r+\delta$. Since $\msR(u)$ has full measure in $u(\mbT\times[-1,1])$ by the Morse-Sard theorem, it must contain some $s_0\in(r-\delta,r+\delta)$. Then $\Gamma_{s_0}$ is a contractible (resp. non-contractible) closed streamline in $u^{-1}(\msR(u))$.

We are now in a position to prove Proposition \ref{prop_B}.

\begin{proof}[Proof of Proposition \ref{prop_B}]
The proof is divided into six steps.

{\it Step 1.} 
For any non-contractible closed streamline $\Gamma$ of $\bv$ (should one exist), with upward-pointing unit normal $\bn$, we claim that there exists a constant $b$ independent of $\Gamma$ such that
\begin{align}\label{const_int}
\int_{\Gamma}\bn\cdot\nabla\omega\,ds=b.
\end{align}

The proof of \eqref{const_int} is similar to that of \eqref{int_gra_omega}, so we only provide a sketch.
First, it follows from the Navier-Stokes equations \eqref{NS_forced} and the periodicity of $\bv^\nu$ and $\bbf^\nu$ that $\nabla P^\nu$ is also periodic in $x_1$. Hence, there exists a constant $a^\nu$ depending only on $\nu$ such that
\begin{align*}
P^\nu(x_1,x_2)-P^\nu(x_1+2\pi,x_2)=a^\nu \quad {\rm for\ all}\ x\in\mbR\times(-1,1).
\end{align*}
By \eqref{zero_flux_strip2}, $\bv^\nu$ admits a periodic stream function. So the results established in Lemma \ref{prelim_u_convergence} and Lemma \ref{prelim_lemma} are applicable, and if $\bv$ possesses a non-contractible closed streamline, then so does $\bv^\nu$ for all sufficiently small $\nu$. 
Let $\Gamma^\nu$ be any non-contractible closed streamline of $\bv^\nu$. Denote by $\bn^\nu$ the upward-pointing unit normal of $\Gamma^\nu$. We can rewrite \eqref{total_head_pressure} as
\begin{align*}
(\bn^\nu)^\perp\cdot\nabla \left(\frac12|\bv^\nu|^2+P^\nu\right)
=\nu \bn^\nu\cdot\nabla \omega^\nu+(\bn^\nu)^\perp\cdot\bbf^\nu.   
\end{align*}
The integral of the left-hand side of the above equation along $\Gamma^\nu$ equals $a_\nu$. 
Therefore, one has
\begin{align}\label{integral_constraint_epsl2}
\int_{\Gamma^\nu}\bn^\nu\cdot\nabla\omega^\nu\,ds=b^\nu-\frac{1}{\nu}\int_{\Gamma^\nu}(\bn^\nu)^\perp\cdot\bbf^\nu\,ds,
\end{align}
where $b^\nu=\frac{a^\nu}{\nu}$.
Let $\Gamma_r\subset u^{-1}(r)$ be any non-contractible closed streamline of $\bv$. We refer the reader to the proof of Proposition \ref{prop_A} for the definition of the functions
\begin{align*}
X(t)=\int_{\Gamma_{r+t}}\frac{\nabla u}{|\nabla u|}\cdot\nabla\omega\,ds,\quad 
Z(t)=\int_{-\delta_0}^{t}\int_{-\delta_0}^{t'}X(t'') \,d t''\,d t',
\quad t\in [-\delta_0,\delta_0],
\end{align*}
and
\begin{align*}
X^\nu(t)=\int_{\Gamma_{r+t}^\nu}\frac{\nabla u^\nu}{|\nabla u^\nu|}\cdot\nabla\omega^\nu \,ds,\quad  
Z^\nu(t)=\int_{-\delta_0}^{t}\int_{-\delta_0}^{t'}X^\nu(t'') \,d t''\,d t',
\quad t\in [-\delta_0,\delta_0].   
\end{align*}
Let us assume $\bn=\frac{\nabla u}{|\nabla u|}$, and hence $\bn^\nu=\frac{\nabla u^\nu}{|\nabla u^\nu|}$ (the case $\bn=-\frac{\nabla u}{|\nabla u|}$ is handled similarly). 
It follows from \eqref{integral_constraint_epsl2} that
\begin{align*}
Z^\nu(t)=\frac12 b^\nu (t+\delta_0)^2
-\frac{1}{\nu}\iint_{A^\nu_{-\delta_0,t}}(r+t-u^\nu)\bbf^\nu\cdot\bv^\nu\,dx.    
\end{align*}
Now recalling \eqref{smaller_force2} and \eqref{limit_trick}, for $t\in [-\delta_0,\delta_0]$, one has
\begin{align*}
&\lim_{\nu\rightarrow 0}
\left( \frac12 b^\nu (t+\delta_0)^2+(t+\delta_0)\int_{\Gamma^{\nu}_{r-\delta_0}}\omega^\nu|\nabla u^\nu|\,ds \right)
=
Z(t)+(t+\delta_0)\int_{\Gamma_{r-\delta_0}}\omega|\nabla u|\,ds.
\end{align*}
The pointwise convergence of the sequence of quadratic functions in $t+\delta_0$ (on the left-hand side of the above identity) implies the convergence of each of its coefficients. Namely, both
\[
b\vcentcolon=\lim_{\nu \rightarrow 0^+}b^\nu
\quad
\text{and}\quad 
l_0\vcentcolon=\lim_{\nu\rightarrow 0^+}\int_{\Gamma^{\nu}_{r-\delta_0}}\omega^\nu|\nabla u^\nu|\,ds
\]
exist. 
Since $b^\nu=\frac{a^\nu}{\nu}$ depends only on $\nu$, its limit $b$ must be independent of the choice of the non-contractible closed streamline $\Gamma_r$.
We proceed to the conclusion that
\begin{align*}
Z(t)=\frac12 b(t+\delta_0)^2+l_0(t+\delta_0)-(t+\delta_0)\int_{\Gamma_{r-\delta_0}}\omega|\nabla u|\,ds
,\quad \text{for\ any}\ t\in [-\delta_0,\delta_0].
\end{align*}
It immediately follows that $X(t)=Z''(t)=b$ for all $t\in [-\delta_0,\delta_0]$, thereby proving \eqref{const_int}.

{\it Step 2.} 
If $u^{-1}(\msR(u))$ does not contain any contractible closed streamline, it follows from Theorem \ref{main_thm}-\ref{thm_B} that $\bv$ must be a shear flow, say, $\bv=(g(x_2),0)$ for some $C^2$ function $g$. Then by \eqref{const_int}, we have 
\[
g''=-\frac{b}{2\pi}, \quad \text{in}\ \, (-1,1)\setminus\{g=0 \}.
\]
On the other hand, $g''=0$ $\mcH^1$-a.e. in $\{g=0 \}$. Since $g''$ is continuous in $(-1,1)$, it must hold that either $g''=-\frac{b}{2\pi}$ or $g''=0$ in the whole interval $(-1,1)$. So $\bv=(c_0+c_1 x_2+c_2 x_2^2,0)$ for some constants $c_0,c_1,$ and $c_2$.

{\it Step 3.}
If $u^{-1}(\msR(u))$ consists solely of contractible closed streamlines, Proposition \ref{prop_A} would imply that $\omega$ is constant. However, the only Euler flow with constant vorticity in $\mbT\times[-1,1]$ is a shear flow of the form $(c_0+c_1x_2,0)$, which leads to a contradiction.

{\it Step 4.} 
Assume that $u^{-1}(\msR(u))$ contains both contractible and non-contractible closed streamlines. In the remaining steps, we show that this last possibility leads to a contradiction as well.

Let $\cGm_1$ be a contractible closed streamline of $\bv$. By Proposition \ref{prop_A}, the vorticity $\omega$ is constant in $\text{Int}(\cGm_1)$. This constant is clearly nonzero; otherwise, $u$ would be constant in $\text{Int}(\cGm_1)$, contradicting $|\nabla u|>0$ on $\cGm_1$.
Without loss of generality, we may assume that
\begin{align*}
\omega=\Delta u=-1\quad {\rm in}\ \text{Int}(\cGm_1).
\end{align*}
Let $\Upsilon$ denote the collection of $\cGm_1$ and all contractible closed streamlines that contain $\cGm_1$ in their interior.
Define
\[
D=\bigcup_{\cGm\in\Upsilon}\text{Int}(\cGm).
\]
Applying Proposition \ref{prop_A} again, it is clear that
\begin{align}\label{torsion}
\omega=\Delta u=-1\quad {\rm in}\ \Bar{D}.    
\end{align}
Now we establish the following lemma in the spirit of Lemma \ref{lemma_maxtube}.

\begin{lemma}\label{lemma_maxeddy}
There exists a sequence $\{\cGm_k\}_{k\in\mbN_+}\subset\Upsilon$ such that $\cGm_k\subset\textup{Int}(\cGm_{k+1})$ for every $k\in\mbN_+$, and that $D=\bigcup_{k\in\mbN_+}\textup{Int}(\cGm_k)$. 
\end{lemma}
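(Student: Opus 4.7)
The lemma parallels Lemma \ref{lemma_maxtube}-(ii), but the contractible setting permits a cleaner argument because any two elements of $\Upsilon$ are automatically nested. My plan is: (i) show $\Upsilon$ is totally ordered by interior-containment; (ii) establish that the area-supremum over $\Upsilon$ is never attained; (iii) extract a nested sequence and verify it exhausts $D$.

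\textbf{Steps in order.} First I will show $\Upsilon$ is totally ordered by ``contained in the interior of.'' Distinct streamlines of the $C^1$ field $\bv$ are disjoint, so any two distinct $\cGm,\cGm'\in\Upsilon$ are disjoint Jordan curves both containing the compact set $\cGm_1$ in their interior; the Jordan curve theorem then forces either $\cGm\subset\textup{Int}(\cGm')$ or $\cGm'\subset\textup{Int}(\cGm)$. Next I set $\alpha\vcentcolon=\sup_{\cGm\in\Upsilon}\mcH^2(\textup{Int}(\cGm))\leq 4\pi$ and argue that $\alpha$ is not attained. Given any $\cGm\in\Upsilon$, note that $|\nabla u|>0$ on $\cGm$ (since $\cGm$ is a streamline) and $\cGm$ is compactly contained in the strip; the implicit function theorem, exactly as used in Lemma \ref{prelim_lemma}-(i), produces a tubular neighborhood of $\cGm$ foliated by $C^1$ Jordan level curves of $u$. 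A level curve $\cGm''$ sufficiently close to $\cGm$ on the side opposite $\cGm_1$ lies in $\Upsilon$ and strictly encloses $\cGm$, whence $\mcH^2(\textup{Int}(\cGm''))>\mcH^2(\textup{Int}(\cGm))$, ruling out attainment of $\alpha$.

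\textbf{Constructing and verifying the sequence.} Pick any $\{\hat{\cGm}_k\}\subset\Upsilon$ with $\mcH^2(\textup{Int}(\hat{\cGm}_k))\to\alpha$; because $\alpha$ is not attained, I may pass to a subsequence whose areas are strictly increasing. Total ordering then forces $\hat{\cGm}_k\subset\textup{Int}(\hat{\cGm}_{k+1})$ (the reverse containment would contradict strict monotonicity of the areas). Setting $\cGm_k\vcentcolon=\hat{\cGm}_k$, to verify $D=\bigcup_k\textup{Int}(\cGm_k)$ I take $x\in D$ and choose $\cGm\in\Upsilon$ with $x\in\textup{Int}(\cGm)$; by total ordering, $\cGm$ is comparable to each $\cGm_k$, and if $\cGm_k\subset\textup{Int}(\cGm)$ held for all $k$, then $\mcH^2(\textup{Int}(\cGm))\geq\alpha$ would attain the supremum, a contradiction. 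Hence $\cGm\subset\textup{Int}(\cGm_k)$ for some $k$, giving $x\in\textup{Int}(\cGm_k)$.

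\textbf{Main obstacle.} The delicate step will be the extendibility argument used for non-attainment: I must check that the foliating level curve $\cGm''$ on the outer side of $\cGm$ genuinely lies in $\Upsilon$. Specifically, I need (a) the tubular neighborhood of $\cGm$ to stay inside $\mbT\times(-1,1)$, which follows from $\textup{dist}(\cGm,\{x_2=\pm1\})>0$; (b) $|\nabla u|>0$ throughout that neighborhood, which is immediate by continuity; and (c) $\cGm''$ to be contractible and to contain $\cGm_1$ in its interior, which follows from Hausdorff-closeness of $\cGm''$ to $\cGm$ together with the positive gap $\textup{dist}(\cGm_1,\cGm)>0$. Once non-attainment is secured, the remaining steps are routine set-theoretic manipulations via the total order.
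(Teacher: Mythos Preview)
Your proof is correct and follows essentially the same approach as the paper: both exploit the total ordering of $\Upsilon$ by interior-containment and the non-attainment of the area supremum (via a tubular-neighborhood extendibility argument) to build a nested exhausting sequence. The only difference is organizational---you isolate total ordering and non-attainment as explicit preliminary steps, whereas the paper interleaves them in an inductive construction and leaves non-attainment (needed for the strict inequality $\mcH^2(\textup{Int}(\cGm_x))<\beta$ in the final contradiction) implicit.
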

\begin{proof}
We construct $\{\cGm_k\}_{k\in\mbN_+}\subset\Upsilon$ inductively.
Assume that we have constructed
\[
\cGm_1,\cGm_2,\cdots,\cGm_k
\]
for some $k\in\mbN_+$. First, there exists $\Gamma\in\Upsilon$ with $\cGm_k\subset\textup{Int}(\Gamma)$.
Define
\[
\beta=\sup_{\cGm\in\Upsilon}\mcH^2(\text{Int}(\cGm)).
\]
Then there exists $\tilde{\Gamma}\in\Upsilon$ such that 
\[
\beta-\frac1k<\mcH^2(\text{Int}(\Tilde{\Gamma}))<\beta.
\]
Clearly, it holds that either $\Tilde{\Gamma}\subset\text{Int}(\Gamma)$ or $\Gamma\subset\text{Int}(\Tilde{\Gamma})$. Now define $\cGm_{k+1}=\Gamma$ if $\Tilde{\Gamma}\subset\text{Int}(\Gamma)$, or $\cGm_{k+1}=\Tilde{\Gamma}$ if $\Gamma\subset\text{Int}(\Tilde{\Gamma})$. It is clear that 
\[
\cGm_{k+1}\in\Upsilon, \quad \cGm_k\subset\textup{Int}(\cGm_{k+1}),\quad \text{and}\quad  \lim_{k\rightarrow\infty}\mcH^2(\text{Int}(\cGm_{k}))=\beta.
\]
We shall prove $D=\bigcup_{k\in\mbN_+}\textup{Int}(\cGm_k)$ by contradiction. Assume there exists $x\in D$ such that $x\notin \textup{Int}(\cGm_k)$ for any $k\in\mbN_+$. By the definition of $D$, there exists $\Hat{\Gamma}\in\Upsilon$ such that $x\in \text{Int}(\Hat{\Gamma}).$ Then it holds that $\cGm_k\subset \text{Int}(\Hat{\Gamma})$ for every $k\in\mbN_+$. But this leads to a contradiction that 
\[
\lim_{k\rightarrow\infty}\mcH^2(\text{Int}(\cGm_{k}))\le \mcH^2(\text{Int}(\Hat{\Gamma})) <\beta.
\]
Hence $D\subset\bigcup_{k\in\mbN_+}\textup{Int}(\cGm_k)$. This finishes the proof of Lemma \ref{lemma_maxeddy}.
\end{proof}

{\it Step 5.} 
For every $k\in\mbN_+$, $u$ is constant on $\cGm_k$, and we denote $c_k=u|_{\cGm_k}$. Applying the strong maximum principle to \eqref{torsion}, we infer that $c_k>c_{k+1}$ for every $k\in\mbN_+$. Thus, the limit $\lim_{k\rightarrow\infty}c_k$ exists, and we may assume without loss of generality that its value is zero. We conclude that
\begin{eqnarray}\label{positive_torsion}
\left\{\begin{aligned}
&\Delta u=-1, & {\rm in}\ \Bar{D},\\
&u>0, & {\rm in}\ D,\\
&u=0, & {\rm on}\ \partial D.
\end{aligned}\right.
\end{eqnarray}

Since $D$ is an open set, it contains a closed disk. Translate this closed disk in the horizontal direction until it first contacts the boundary $\partial D$ at some point $z$. Although $\partial D$ may contain boundary points of the strip, this procedure ensures that $z$ lies in the interior of the strip. Now $D$ satisfies the interior ball condition at $z\in\partial D$. We can apply Hopf's Lemma to the problem \eqref{positive_torsion}, and obtain $|\nabla u(z)|>0$. 
Denote $\bn(z)=\nabla u(z)/|\nabla u(z)|$.
There exists a disk $\overline{B_{\delta_0}(z)}$ (with $\delta_0>0$) lying entirely within the strip $\mbT\times(-1,1)$, on which $\bn(z)\cdot\nabla u>0$ and
\begin{align}\label{subsolution}
\omega=\Delta u\le -\frac12.
\end{align}
By the implicit function theorem, there exists $\delta_1>0$ such that $u^{-1}([-\delta_1,\delta_1])\cap B_{\delta_0}(z)$ are foliated by $\{\gamma_s \}_{s\in[-\delta_1,\delta_1]}$, where each $\gamma_s \vcentcolon=u^{-1}(s)\cap B_{\delta_0}(z)$ is a graph over an interval in the direction perpendicular to $\bn(z)$. As a consequence, there exists a $C^1$ function $f$ such that
\begin{align}\label{slee_in_tube}
\omega=\Delta u=f(u)\quad {\rm in}\  S_{-\delta_1,\delta_1},
\end{align}
where we define $S_{s_1,s_2}:=\bigcup_{s\in(s_1,s_2)}\gamma_s$.
Additionally, it is clear to see that $\gamma_0\subset\partial D$, $S_{0,\delta_1}\subset D$, and $S_{-\delta_1,0}\subset\mbT\times(-1,1)\setminus\Bar{D}$. We then differentiate \eqref{slee_in_tube} to get
\[
0=|\nabla\omega(z)|=|f'(u(z))||\nabla u(z)|.
\]
This implies
\begin{align}\label{fprime_zero}
f'(0)=f'(u(z))=0.    
\end{align}

{\it Step 6.} 
We show that the constancy of $\omega$ in $\bar{D}$ propagates across $\gamma_0$ to the entire strip, contradicting the presence of contractible closed streamlines.

For this, we first claim that for every $s\in(-\delta_1,0)$, $\gamma_{s}$ cannot be extended to a contractible closed streamline $\cGm_{s}\subset u^{-1}(s)$. Suppose, by contradiction, that such a streamline $\cGm_{s}$ exists. By the maximality of $D$, $\Bar{D}$ lies in the exterior of $\cGm_s$. So $S_{s,\delta_1}$ and $S_{-\delta_1,s}$ lie in the exterior and interior of $\cGm_s$, respectively. Then $\bnu\vcentcolon=\nabla u/|\nabla u|$ is the outward unit normal of $\cGm_s$ on the segment $\gamma_s$. However, it follows from Proposition \ref{prop_A} that $\omega$ is constant within $\text{Int}(\cGm_{s})$. So \eqref{subsolution} also holds in $\text{Int}(\cGm_{s})$. Applying Hopf's lemma yields $\bnu\cdot\nabla u<0$ on $\gamma_s$, contradicting the facts that $\bnu=\nabla u/|\nabla u|$ and $|\nabla u|>0$ on $\gamma_s$. This proves the claim.

\begin{figure}[htbp]
\centering
\begin{tikzpicture}[scale=1.6]
    \def\deltaZero{0.6}  
    \def\deltaSmall{0.15}
    \def\pointSize{1.2pt}
    
    \draw[thick, fill=gray!15] 
    (1.3,0.3) .. controls (1.8,2.0) and (2.8,2.2) .. 
    (4.0,1.8) .. controls (4.8,1.533) and (4.5,0.4) .. 
    (3.0,0.1) .. controls (1.9,-0.06) and (1.20,-0.04) .. 
    cycle;
    
    \node at (3.0,1.0) {$D$};
    
    \coordinate (z) at (2,1.57);
    
    \fill[black] (z) circle (\pointSize);
    \node[black, right] at (z) {$z$};
    
    \draw[dashed] (z) circle (\deltaZero);
    \node[above] at ($(z) + (0.2,-0.45)$) {$B_{\delta_0}(z)$};

    \draw[blue!15, line width=6pt] 
        (1,0.9) .. controls (1.5,1.6) and (3,3.1) .. 
        (4.5,1.6) .. controls (4.5,1.6) and (5,1.1) .. (6,0.9);
    \node[blue, above] at (5,1.8) {$\nGm_{-\delta}\subset S(\nGm_{-\delta})$};
    
    \draw[blue, thick] 
        (1,0.9) .. controls (1.5,1.6) and (3,3.1) .. 
        (4.5,1.6) .. controls (4.5,1.6) and (5,1.1) .. (6,0.9);
    
    \begin{scope}
        \path[clip] (z) circle (\deltaZero);
        \draw[red, line width=2pt] 
            ($(z) + (-0.9,-0.53)$) .. controls ($(z) + (-0.3,0.23)$) and ($(z) + (0.25,0.42)$) .. 
            ($(z) + (0.5,0.57)$);
    \end{scope}
    
    \node[red, below] at ($(z) + (-0.32,0.54)$) {$\gamma_{-\delta}$};

\end{tikzpicture}
\caption{Illustration for the proof of Proposition \ref{prop_B}.}
\label{illustration_proof}
\end{figure}

For every $\delta_2\in (0,\delta_1)$, it follows from \eqref{decomposition_strip_again} and Lemma \ref{lemma_null} that $S_{-\delta_2,0}\setminus (u^{-1}(\msC(u))\cap \{|\nabla u|>0 \})$ is a subset of $u^{-1}(\msR(u))$ with positive $\mcH^2$-measure. Consequently, for every $\delta_2\in (0,\delta_1)$, there exists $\delta\in(0,\delta_2)$ such that $-\delta\in\msR(u)$. By the above claim, we see that the component of $u^{-1}(-\delta)$ that contains $\gamma_{-\delta}$ is a non-contractible closed streamline. We therefore denote this component by $\nGm_{-\delta}$. 
One can further find a regular tube (see Definition \ref{defn_regular_tube}) $S(\nGm_{-\delta})$ containing $\nGm_{-\delta}$ (see Figure \ref{illustration_proof}). Then the equation \eqref{slee_in_tube} also holds in $S(\nGm_{-\delta})$.
This, along with \eqref{const_int}, implies that
\begin{align}\label{fprime_delta}
|f'(-\delta)| \int_{\nGm_{-\delta}}|\nabla u| \,ds=|b|.
\end{align}

For any non-contractible closed streamline $\nGm$, let $\Tilde{S}$ be the region bounded by $\nGm$ and $\mbT\times\{ -1\}$. Then one has
\begin{align*}
\iint_{\Tilde{S}}\Delta u\,dx
=\int_{\nGm}\bn\cdot\nabla u\,ds-\int_{\mbT}\partial_{x_2}u(x_1,-1)\,dx_1.
\end{align*}
Since either $\bn\cdot\nabla u\equiv |\nabla u|$ or $\bn\cdot\nabla u\equiv -|\nabla u|$ on $\nGm$, it follows that
\begin{align*}
\int_{\nGm}|\nabla u|\,ds\le \iint_{\mbT\times(-1,1)}|\Delta u|\,dx+\int_{\mbT}|\partial_{x_2}u(x_1,-1)|\,dx_1 <\infty.
\end{align*}
Combining this with \eqref{fprime_zero} and then taking the limit as $\delta\rightarrow 0$ in \eqref{fprime_delta} yields $b=0$.

Finally, since $b=0$ in \eqref{const_int}, the same reasoning as in the proof of Proposition \ref{prop_A} shows that $\omega$ is constant in $\mbT\times(-1,1)$. This contradicts the presence of contractible closed streamlines, thus completing the proof of Proposition \ref{prop_B}.
\end{proof}


\medskip 

{\bf Acknowledgement.}
The research of Gui is supported by NSFC Key Program (Grant No. 12531010), University of Macau research grants CPG2024-00016-FST, CPG2025-00032-FST, SRG2023-00011-FST, MYRG-GRG2023-00139-FST-UMDF, UMDF Professorial Fellowship of Mathematics, Macao SAR FDCT 0003/2023/RIA1 and Macao SAR FDCT 0024/2023/RIB1. 
The research of Xie is partially supported by  NSFC grants 12571238 and 12426203.


\bibliographystyle{plain}

\end{document}